\definecolor{purple}{rgb}{.9,0,.9}
\newcommand\subsetsim{\mathrel{\substack{
  \textstyle\subset\\[-0.2ex]\textstyle\sim}}}
\let\orgdescriptionlabel\descriptionlabel
\renewcommand*{\descriptionlabel}[1]{%
  \let\orglabel\label
  \let\label\@gobble
  \phantomsection
  \edef\@currentlabel{#1}%
  \let\label\orglabel
  \orgdescriptionlabel{#1}%
}
\newcommand{\Nat}{\mathbb{N}}
\newcommand{\Real}{\mathbb{R}}
\newcommand{\rfa}{\quad {\rm for \ all}\ }
\newcommand{\cA}{{\cal A}}\newcommand{\cB}{{\cal B}}\newcommand{\cC}{{\cal C}}
\newcommand{\cD}{{\cal D}}\newcommand{\cE}{{\cal E}}\newcommand{\cF}{{\cal F}}
\newcommand{\cG}{{\cal G}}\newcommand{\cH}{{\cal H}}\newcommand{\cI}{{\cal I}}
\newcommand{\cJ}{{\cal J}}
\newcommand{\cM}{{\cal M}}\newcommand{\cN}{{\cal N}}\newcommand{\cO}{{\cal O}}
\newcommand{\cP}{{\cal P}}\newcommand{\cR}{{\cal R}}
\newcommand{\cS}{{\cal S}}\newcommand{\cT}{{\cal T}}\newcommand{\cU}{{\cal U}}
\newcommand{\cW}{{\cal W}}\newcommand{\cX}{{\cal X}}
\newcommand{\ba}{{\bf a}}\newcommand{\bb}{{\bf b}}\newcommand{\bc}{{\bf c}}
\newcommand{\bd}{{\bf d}}\newcommand{\be}{{\bf e}}
\newcommand{\bh}{{\bf h}}
\newcommand{\bm}{{\bf m}}\newcommand{\bn}{{\bf n}}
\newcommand{\bt}{{\bf t}}\newcommand{\bu}{{\bf u}}
\newcommand{\bv}{{\bf v}}\newcommand{\bw}{{\bf w}}
\newcommand{\bA}{{\bf A}}
\newcommand{\bB}{{\bf B}}\newcommand{\bC}{{\bf C}}\newcommand{\bD}{{\bf D}}
\newcommand{\bI}{{\bf I}}
\newcommand{\bL}{{\bf L}}
\newcommand{\bR}{{\bf R}}
\newcommand{\bnu}{\boldsymbol{\nu}}
\newcommand{\bchi}{\boldsymbol{\chi}}
\newcommand{\ve}{\varepsilon}
\newcommand{\bkappa}{\boldsymbol{\kappa}}
\newcommand{\btau}{\boldsymbol{\tau}}
\newtheorem{theorem}{Theorem}[section]
\newtheorem{lemma}[theorem]{Lemma}
\newtheorem{proposition}[theorem]{Proposition}
\newcommand\Tstrut{\rule{0pt}{2.6ex}}         % = `top' strut
\newcommand{\beqn}{\begin{equation}}
\newcommand{\eeqn}{\end{equation}}
\newcommand{\bzero}{{\bf 0}}
\title{A fractional notion of length\\ and an associated nonlocal curvature\footnote{This is a corrected version of my paper \cite{S20} published under the same name.}}
\author{Brian Seguin}
\begin{document}
\date{}

\maketitle

\tableofcontents

\begin{abstract}
\noindent Here a new notion of fractional length of a smooth curve, which depends on a parameter $\sigma$, is introduced that is analogous to the fractional perimeter functional of open sets.  It is shown that in an appropriate limit the fractional length converges to the traditional notion of length up to a multiplicative constant.  Since a curve that connects two points of minimal length must have zero curvature, the Euler--Lagrange equation associated with the fractional length is used to motivate a nonlocal notion of curvature for a curve.  This is analogous to how the fractional perimeter has been used to define a nonlocal mean-curvature.\\

\noindent \emph{Dedicated to Eliot Fried, whose guidance following my time as a graduate student will always be appreciated.}

\end{abstract}

\section{Introduction}

\subsection{Background}

The origins of fractional perimeter and nonlocal curvature began with the work of Caffarelli, Roquejoffre, and Savin \cite{CRS10} who defined, up to a multiplicative constant, the $\sigma$-perimeter, for $0<\sigma<1$, of a measurable set $E\subseteq\Real^n$ relative to an open, bounded set $\Omega\subseteq\Real^n$ by
\beqn
\text{Per}_\sigma(E,\Omega):=\mathcal{I}(E\cap\Omega,E^c\cap\Omega)+\cI(E\cap\Omega,E^c\cap\Omega^c)+\cI(E\cap\Omega^c,E^c\cap\Omega),
\eeqn
where
\beqn
\cI(A,B):=\frac{1}{\alpha_{n-1}}\int_A\int_B |x-y|^{-n-\sigma}dxdy,\qquad A\cap B=\emptyset,
\eeqn
and $\alpha_{n-1}$ is the volume of the unit ball in $\Real^{n-1}$.  The first term in this definition is related to the fractional Sobolev space seminorm $|\chi_E|_{H^{\sigma/2}(\Omega)}$, and can be viewed as the fractional perimeter of $E$ inside of $\Omega$, while the other two terms can be interpreted as the fractional perimeter near $\partial\Omega$.  The study of functionals of this kind goes back to the work of Visintin \cite{V91}.  It is known \cite{CV11} that if the boundary of $E$ is smooth, then
\beqn\label{sPerlim}
\lim_{\sigma\uparrow 1} (1-\sigma)\text{Per}_\sigma(E,B_r)=\cH^{n-1}(\partial E\cap B_r)
\eeqn
for almost every $r>0$, where $B_r$ is the ball centered at the origin of radius $r$.  A set $E\subseteq\Real^n$ is a minimizer of the $\sigma$-perimeter relative to $\Omega$ if over all measurable sets $F\subseteq\Real^n$ such that $E\setminus \Omega=F\setminus \Omega$ we have
\beqn
\text{Per}_\sigma(E,\Omega)\leq \text{Per}_\sigma(F,\Omega).
\eeqn
Besides the relation \eqref{sPerlim}, it is known that the $\sigma$-perimeter functional $\Gamma$-converges to the classical notion of perimeter \cite{LPM11}.  

If the boundary of a minimizer $E$ is sufficiently regular, then it must satisfy
\beqn\label{PerEL}
\int_{\Real^n}\frac{\tilde\chi_E(x)}{|z-x|^{n+\sigma}} dx=0\qquad \text{for all}\ z\in\partial E,
\eeqn
where $\tilde\chi_E:=\chi_E-\chi_{E^c}$, $\chi_E$ is the characteristic function for the set $E$, and this integral is taken in the principle-value sense.  Because of the connection between the $\sigma$-perimeter and the areal measure \eqref{sPerlim}, and the fact that surfaces that minimize their area subject to a fixed boundary condition must have zero mean curvature, it is reasonable to define a nonlocal mean-curvature by
\beqn\label{NLMC}
H_\sigma(z):=\frac{1}{\omega_{n-2}}\int_{\Real^n}\frac{\tilde\chi_E(x)}{|z-x|^{n+\sigma}} dx\qquad \text{for all}\ z\in\partial E,
\eeqn
where $\omega_{n-2}$ is the $(n-2)$-dimensional measure of the unit sphere in $\Real^{n-1}$.  Notice that this quantity is independent of $\Omega$ and, hence, well-defined for any point on the surface that is the boundary of the set $E$.  Assuming that $\partial E$ is smooth, this curvature converges to the classical mean-curvature \cite{AV14} in the following sense:
\beqn\label{limHs}
\lim_{\sigma\uparrow 1}(1-\sigma)H_\sigma(z)=H(z).
\eeqn

The asymptotics of the fractional perimeter and nonlocal curvature as $\sigma$ goes to zero have also been studied.  Namely, it was shown in \cite{DF13} that
\beqn
\lim_{\sigma\downarrow 0} \sigma \text{Per}_\sigma(E,\Omega)=\frac{1}{\alpha_{n-1}}\big[(1-a(E))\cH^n(E\cap\Omega)+a(E)\cH^n(\Omega\backslash E)\big],
\eeqn
where $a(E):=\displaystyle \lim_{\sigma\downarrow 0}\frac{\sigma}{\omega_{n-1}} \int_{E\backslash B_1} |y|^{-n-\sigma}\, dy$, and in \cite{DV18} that
\beqn
\lim_{\sigma\downarrow 0} \sigma H_\sigma(z)=\frac{\omega_{n-1}}{\omega_{n-2}}.
\eeqn

The minimizers of the $\sigma$-perimeter functional, called $\sigma$-minimal surfaces, have been studied in great detail in recent years.  The regularity of $\sigma$-minimal surfaces has been investigated by Valdinoci and collaborators \cite{CL13,DV99,FV15??,SV13}. Among other things, it is known that $\sigma$-minimal surfaces are smooth off of a singular set of dimension at most $n-8$ for $\sigma$ sufficiently close to $1$. While this is in agreement with a well-known result for classical minimal surfaces \cite{G84}, $\sigma$-minimal surfaces may have features different from their classical counterparts, in that they may stick to the boundary of $\Omega$ \cite{DSV17,DV99}. The motion of surfaces by nonlocal mean-curvature has been investigated using level set methods \cite{CMP15,CMP12,CMP13,I09}.

\subsection{Extension and motivation}

The above discussion of nonlocal mean-curvature applies to surfaces that are the boundary of a set.  However,  Paroni, Podio-Guidugli, and Seguin discovered that it is possible to define these concepts for any smooth (hyper)surface \cite{PPGS99}.  The main idea is to define a fractional notion of area and find a condition similar to \eqref{PerEL} that a minimizer of this functional must satisfy.  Towards this end, they first showed that for a bounded set $E$ with smooth boundary and bounded, open $\Omega$ containing $E$ one can write
\beqn\label{Pareas}
\text{Per}_\sigma(E,\Omega)= \frac{1}{\alpha_{n-1}} \int_E\int_{E^c} |x-y|^{-n-\sigma}dxdy = \frac{1}{2\alpha_{n-1}}\int_{\cX(\partial E)}|x-y|^{-n-\sigma}dxdy,
\eeqn
where $\cX(\partial E)$ is the set of all pairs $(x,y)\in\Real^n\times\Real^n$ such that the oriented line segment connecting $x$ to $y$ crosses $\partial E$ an odd number of times.  The validity of \eqref{Pareas} follows from the fact that $\cX(\partial E)$ and $(E\times E^c)\cup (E^c\times E)$ agree up to a set of $\cH^{2n}$-measure zero.  As the far right-hand side of \eqref{Pareas} is expressed using $\partial E$, and not the set $E$, this motivates the following definition of the $\sigma$-area for a smooth surface $\cS$ with or without boundary:
\beqn\label{sArea}
\text{Area}_\sigma(\cS,\Omega):=\frac{1}{2\alpha_{n-1}}\int_{\cX(\cS)}|x-y|^{-n-\sigma}\max\{\chi_\Omega(x),\chi_\Omega(y)\}dxdy,
\eeqn
where it is assumed that $\cS$ is contained in $\Omega$.  The presence of $\max\{\chi_\Omega(x),\chi_\Omega(y)\}$ in the integrand is necessary to ensure the integral converges.  In this way, it is similar to the role $\Omega$ plays in the definition of the $\sigma$-perimeter.  It follows from \eqref{Pareas} that in the case where $\cS=\partial E$ and $E\subseteq \Omega$ that $\text{Area}_\sigma(\cS,\Omega)=\text{Per}_\sigma(E,\Omega)$.  The $\sigma$-area satisfies a limit relationship analogous to \eqref{sPerlim}.  It was shown \cite{PPGS99} that if $\cS$ minimizes the $\sigma$-area relative to all smooth, bounded, oriented surfaces in $\Omega$ that have the same boundary as $\cS$, then $\cS$ must satisfy
\beqn\label{sAreaEL}
\int_{\cA_e(z)}|z-y|^{-n-\sigma}dy-\int_{\cA_i(z)}|z-y|^{-n-\sigma}dy=0\rfa z\in \cS,
\eeqn
where
\begin{align}
\nonumber\cA_e(z)&:=\big\{y\in \Real^n\ |\ \big((z,y)\in \cX(\cS)\ \text{and}\ (z-y)\cdot \bn(z)> 0\big)\\
\label{Ae}&\hspace{1in}\text{or } \big((z,y)\in \cX(\cS)^c\ \text{and}\ (z-y)\cdot \bn(z)< 0\big)\big\},\\
\nonumber\cA_i(z)&:=\big\{y\in \Real^n\ |\ \big((z,y)\in \cX(\cS)^c\ \text{and}\ (z-y)\cdot \bn(z)> 0\big)\\
\label{Ai}&\hspace{1in}\text{or } \big((z,y)\in \cX(\cS)\ \text{and}\ (z-y)\cdot \bn(z)< 0\big)\big\}.
\end{align}
See Figure~\ref{AIAEcolor} for a depiction of these sets.

\begin{figure}[h]
\centering
\includegraphics[width=4in]{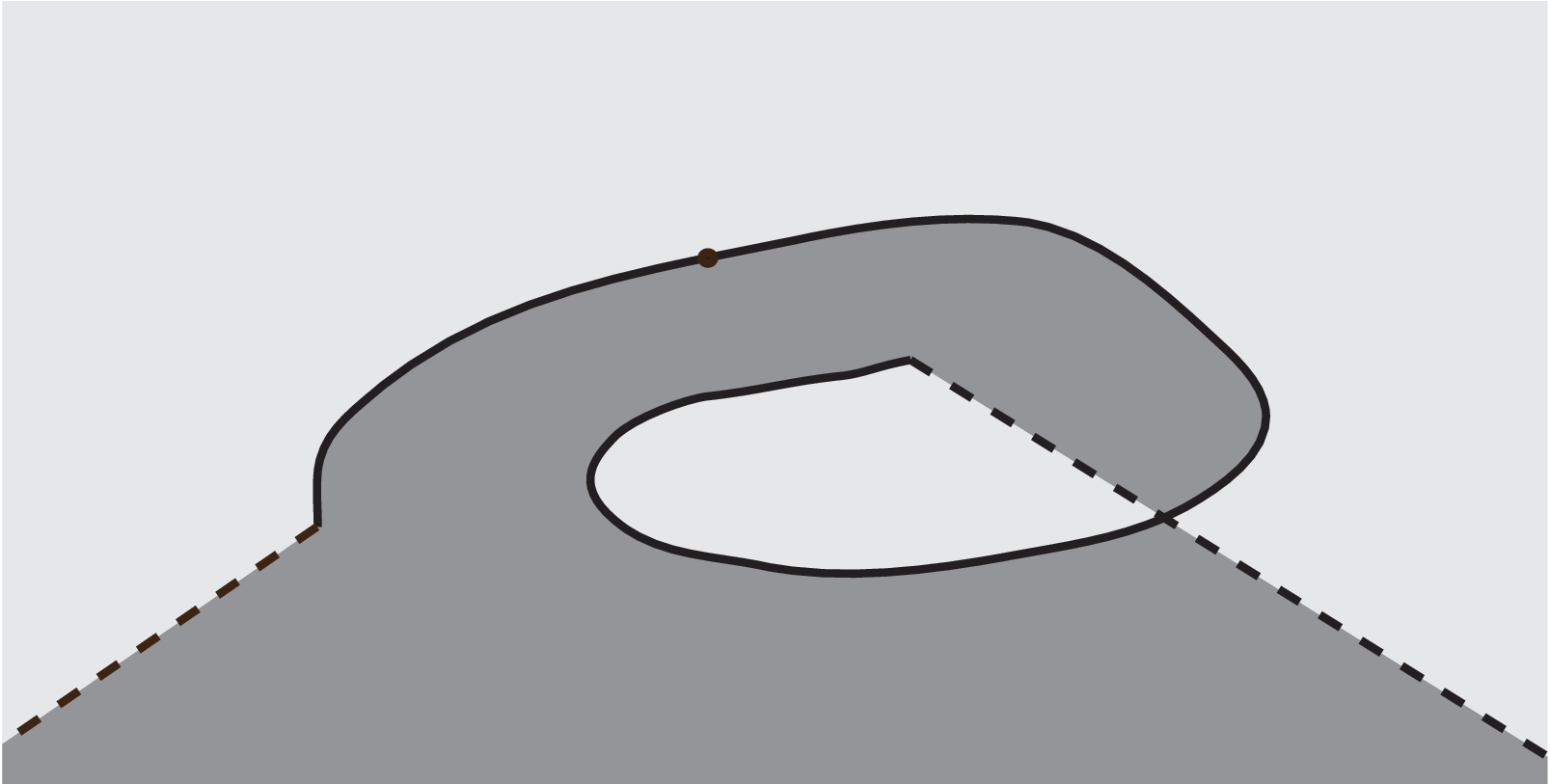}
\thicklines
\put(-170,100){$z$}
\put(-100,110){$\cS$}
\put(-250,100){$\cA_e(z)$}
\put(-150,20){$\cA_i(z)$}
\put(-158,110){$\bn(z)$}
\put(-175,110){\rotatebox[origin=c]{103}{$\vector(1,0){35}$}}
\caption{The solid line depicts $\cS$. The set of points of density 1 for $\cA_e(z)$ is shown in light grey, and the set of points of density 1 for $\cA_i(z)$ is in dark grey.  The dashed lines depict the part of the essential boundary between these sets that is not part of $\cS$.}
\label{AIAEcolor}
\end{figure}

This motivates defining the nonlocal mean-curvature of $\cS$ at $z$ using the opposite of the left-hand side of \eqref{sAreaEL}---that is,
\beqn\label{HsC}
H_\sigma(z):=\frac{1}{\omega_{n-2}} \int_{\Real^n} \frac{\hat\chi_\cS(z,y)}{|z-y|^{n+\sigma}}dy\rfa z\in\cS,
\eeqn
where
\beqn\label{chicS}
\hat\chi_\cS(z,y):=
\begin{cases}
1 & y\in\cA_i(z),\\
0 & y\not\in \cA_i(z)\cup\cA_e(z),\\
-1 & y\in\cA_e(z)
\end{cases}
\eeqn
and it is understood the integral is computed as a principle value.  The opposite of the left-hand side of \eqref{sAreaEL} is used so that the nonlocal mean-curvature of a sphere with outward orientation is negative, just as in the case for the classical mean-curvature.  Notice that $H_\sigma$ does not depend on $\Omega$.  Unsurprisingly, this curvature satisfies the limit relation \eqref{limHs}.  

To motivate a definition of fractional length, we will consider the $\sigma$-area in two dimensions, where a hypersurface is a curve.  When $n=2$, the $\sigma$-area becomes
\beqn\label{sArea2}
\text{Area}_\sigma(\cS,\Omega)=\frac{1}{4} \int_{\cX(\cS)}\frac{\max\{\chi_\Omega(x),\chi_\Omega(y)\}}{|x-y|^{2+\sigma}}dxdy.
\eeqn
The domain of integration here consists of line segments that are described by their endpoints.  A given line segment connecting $x$ to $y$ can be viewed as a one-dimensional disc and, hence, can be described by its midpoint $p$, a unit vector $\bu$ normal to the disc, and a radius $r$ so that 
\beqn
(x,y)=(p-r\bu',p+r\bu'),
\eeqn
where $\bu'$ is obtained by rotating $\bu$ clockwise by $90^\circ$.  Utilizing this change of variables, \eqref{sArea2} can be rewritten as
\beqn\label{FALmot}
\text{Area}_\sigma(\cS,\Omega)=\frac{1}{2} \int_{\cD(\cS)}(2r)^{-1-\sigma}\max\{\chi_\Omega(p-r\bu'),\chi_\Omega(p+r\bu')\}d\cH^4(p,\bu,r),
\eeqn
where $\cD(\cS)$ consists of all triples $(p,\bu,r)$ describing those one-dimensional discs that intersect $\cS$ an odd number of times and $\cH^4$ is the $4$-dimensional Hausdorff measure.  It is this formula for the fractional length that can be generalized to a curve in $n$ dimensions.  

Before this generalization is done, we first study the measure theoretic properties of the set of all discs that intersect a curve an odd number of times and other related sets of discs in Section~\ref{sectdiscs}.  In Section~\ref{sectFL} the fractional length is defined and it is shown that it converges, in an appropriate limit, to the classical notion of length up to a multiplicative constant.  Next, Section~\ref{sectVar} is dedicated to computing the Euler--Lagrange equation associated with the fractional length and this result is used to motivate a definition of nonlocal curvature for a curve.  The Appendix contains several change of variables formulas that are useful in established the desired results as well as a transport theorem that is applied to compute the first variation of the fractional length.

%%%%%%%%%%%%%%%%%%%%%%%

\section{Sets of discs}\label{sectdiscs}

In this section the set of all $(n-1)$-dimensional discs, and various subsets of it, are studied in $\Real^n$, with $n\geq 2$.  The results established here make precise which discs are integrated over in the definition of the nonlocal length.  Moreover, they will be crucial in computing the first variation of the fractional length.  We use $\cU_n$ to denote the set of unit vectors in $\Real^n$, and set
$$\cU_\perp^2:=\{(\ba,\bb)\in\cU_n\times\cU_n\ |\ \ba\cdot\bb=0\},$$
which consists of all pairs of orthogonal unit vectors.

The $(n-1)$-dimensional disc with center $p$, normal unit-vector $\bu$, and radius $r$ is denoted by
$$D(p,\bu,r):=\{ p+\xi\bv\ |\ (\bu,\bv)\in\cU_\perp^2,\ \xi\in[0,r)\}.$$
By the boundary $\partial D(p,\bu,r)$ of one of these discs we mean the $(n-2)$-dimensional manifold
\beqn
\{ p+r\bv\in\Real^n\ |\ \bv\in\cU_n\cap\{\bu\}^\perp\},
\eeqn
where $\{\bu\}^\perp$ is the set of all vectors orthogonal to $\bu$.  The disc together with its boundary is denoted by $\bar D(p,\bu,r)$.  Thus, the set of all discs in $\Real^n$ can be described by elements $(p,\bu,r)$ of the set $\cD:=\Real^n\times\cU_n\times\Real^+$, where $\Real^+:=(0,\infty)$.  For this reason, we will sometimes refer to the elements of $\cD$ as discs.

Consider a $C^1$ curve $\cC$ in $\Real^n$ whose closure $\bar\cC$ is a $C^1$, compact curve with boundary points $\partial \cC$ such that $\bar\cC=\cC\cup\partial\cC$. It is not assumed that $\cC$ is connected, so $\partial\cC$ could consist of any positive, even number of points.  Orient $\bar\cC$ so that at each point $z\in\bar\cC$ we have a unit tangent $\bt(z)$.  Consider the following subsets of the set of all discs $\cD$:
\begin{align*}
\cD_{\partial\cC1}&:=\{(p,\bu,r)\in\cD\ |\  \cH^0(\bar D(p,\bu,r)\cap \partial\cC)=1\}, \\
\cD_{\partial\cC2}&:=\{(p,\bu,r)\in\cD\ |\  \cH^0(\bar D(p,\bu,r)\cap \partial\cC)\geq 2\}, \\
\cD_{\partial\cC}&:= \cD_{\partial\cC 1}\cup \cD_{\partial\cC 2},\\
\cD_{\text{tan}}&:= \{ (p,\bu,r)\in\cD\ |\ \text{there is a } z\in \bar D(p,\bu,r)\cap \bar\cC\ \text{such that}\ \bt(z)\cdot \bu=0\},\\
\cD_\infty&:= \{(p,\bu,r)\in\cD\ |\ \cH^0(\bar D(p,\bu,r)\cap \bar\cC)=\infty\},\\
\cD_{\partial 1}&:= \{(p,\bu,r)\in \cD\ |\ \cH^0(\partial D(p,\bu,r)\cap \cC)=1\},\\
\cD_{\partial 2}&:= \{(p,\bu,r)\in \cD\ |\ \cH^0(\partial D(p,\bu,r)\cap \cC)\geq 2\},\\
\cD_{\partial}&:= \cD_{\partial 1}\cup \cD_{\partial 2},\\
\cD_\text{odd}&:=\{ (p,\bu,r)\in\cD\backslash (\cD_{\partial\cC}\cup\cD_{\rm tan}\cup \cD_\partial)\ |\ \cH^0(D(p,\bu,r)\cap \cC) \text{ is an odd number}\},\\
\cD_\text{even}&:=\{ (p,\bu,r)\in\cD\backslash (\cD_{\partial\cC}\cup\cD_{\rm tan}\cup \cD_\partial)\ |\ \cH^0(D(p,\bu,r)\cap \cC) \text{ is an even number}\}.
\end{align*}
The following lemma discusses the measure theoretic properties of these sets.  

\begin{lemma}\label{lemmeasure}
The following facts are true:
\begin{enumerate}
\item\label{Iep} $\cH^{2n-1}(\cD_{\partial\cC}\cap \cE)<\infty$ for any bounded, open set $\cE\subseteq \cD$,
\item\label{Itan} $\cH^{2n-1}(\cD_{\rm tan}\cap \cE)<\infty$ for any bounded, open set $\cE\subseteq \cD$,
\item\label{Ipartial} $\cH^{2n-1}(\cD_\partial\cap \cE)<\infty$ for any bounded, open set $\cE\subseteq \cD$,
\item\label{Iinf} $\cD_\infty\subseteq\cD_\text{\rm tan}$,
\item\label{Ipartial2} $\cH^{2n-1}(\cD_{\partial 2})=\cH^{2n-1}(\cD_{\partial\cC 2})=0$,
\item\label{IH2n10} $\cH^{2n-1}(\cD_{\partial\cC}\cap\cD_{\partial})=0$,
\item\label{Ieo} $\cD_\text{\rm even}$ and $\cD_\text{\rm odd}$ are open subsets of $\cD$,
\item\label{Idun} $\cD=\cD_\text{\rm odd}\cup\cD_\text{\rm even}\cup \cD_{\partial\cC}\cup \cD_{\rm tan}\cup \cD_\partial$.
\end{enumerate}
\end{lemma}

\begin{proof} Let $\cE$ be a bounded, open subset of $\cD$.  Find $R>0$ such that if $(p,\bu,r)\in\cE$, then $r\in (0,R]$.  Set $\cE_\Xi=\Xi^{-1}(\cE)$ and $\cE_{\Psi}=\Psi^{-1}(\cE)$, where $\Xi$ and $\Psi$ are defined in \eqref{XiCOV} and \eqref{PsiCOV}, respectively.

Item \ref{Iep})  Consider the set 
\beqn\label{ApC}
\cA_{\partial\cC}:=\partial\cC\times\cU_\perp^2\times\Real^+_0\times\Real^+
\eeqn
and the function $\Xi:\cA_{\partial\cC}\rightarrow\cD$ defined in \eqref{XiCOV} of the Appendix.  Notice that $\cD_{\partial\cC}\cap \cE \subseteq \Xi(\cA_{\partial\cC}\cap\cE_\Xi)$.  Since $\Xi$ is Lipschitz on $\cA_{\partial\cC}\cap\cE_\Xi$ and $\cH^{2n-1}(\cA_{\partial\cC}\cap\cE_\Xi)<\infty$, it follows that $\cH^{2n-1}(\cD_{\partial\cC}\cap\cE)<\infty$.

Item \ref{Itan}) The proof is the same as Item~\ref{Iep} with the exception that one uses the set
\beqn\label{Atan}
\cA_\text{tan}:=\bigcup_{z\in\bar\cC} \{z\} \times \{(\ba,\bb)\in\cU^2_\perp\ |\ \bb\cdot\bt(z)=0\} \times \Real^+_0\times\Real^+
\eeqn
rather than $\cA_{\partial\cC}$.

Item \ref{Ipartial})  Consider the set 
\beqn\label{Ap}
\cA_\partial:=\cC\times\cU_\perp^2\times \Real^+
\eeqn
and the function $\Psi:\cA_\partial\rightarrow\cD$ defined by \eqref{PsiCOV} in the Appendix.  Notice that $\cD_\partial\cap \cE \subseteq \Psi(\cA_\partial\cap\cE_\Psi)$.  Since $\Psi$ is Lipschitz on $\cA_\partial\cap\cE_\Psi$ and $\cH^{2n-1}(\cA_\partial\cap\cE_\Psi)<\infty$, it follows that $\cH^{2n-1}(\cD_\partial\cap\cE)<\infty$.

Item \ref{Iinf}) Consider $(p,\bu,r)\in \cD_\infty$, so that there are an infinite number of points in $\bar D(p,\bu,r)\cap\bar\cC$.  Since this set is compact it follows that this intersection has a cluster point, say $z\in \bar D(p,\bu,r) \cap\bar\cC$.  Suppose that $(p,\bu,r)\not \in \cD_\text{tan}$, so that $\bt(z)\cdot \bu\not = 0$.  Since in a neighborhood of $z$ the curve $\bar\cC$ can be approximated by its tangent line which has direction $\bt(z)$, it follows that there are no points in this neighborhood besides $z$ in the intersection $\bar D(p,\bu,r)\cap\bar\cC$.  This contradicts the fact that $z$ is a cluster point of $\bar D(p,\bu,r) \cap\bar\cC$.  Thus, we must have $z\in\cD_\text{tan}$.  

Item \ref{Ipartial2}) Consider the set
\beqn
\cA_{\partial 2}:=\{(z_1,z_2,\ba,\bb)\in \cC\times\cC\times \cU^2_\perp \ |\ \ba\cdot (z_2-z_1)>0,\, \bb\cdot(z_2-z_1)=0\}.
\eeqn
and define the function $\Lambda:\cA_{\partial 2}\rightarrow \cD$ by
\beqn
\Lambda(z_1,z_2,\ba,\bb):=(z_1+\frac{|z_2-z_1|^2}{2\ba\cdot(z_2-z_1)}\ba,\bb,\frac{|z_2-z_1|^2}{2\ba\cdot(z_2-z_1)}).
\eeqn
One can check that the boundary of the disc $D(\Lambda(z_1,z_2,\ba,\bb))$ intersects $\cC$ at $z_1$ and $z_2$.  Thus, $\cD_{\partial 2}\subseteq \Lambda(\cA_{\partial 2})$.  Moreover, $\Lambda$ is locally Lipschitz on $\cA_{\partial 2}$.  It follows that since $\cH^{2n-1}(\cA_{\partial 2})=0$, we must have $\cH^{2n-1}(\cD_{\partial 2})=0$.

Now consider the set
\beqn\label{ApC2}
\cA_{\partial\cC2}:=\{(q_1,q_2,\ba,\bb,r)\in\partial\cC\times\partial\cC\times\cU_\perp^2\times\Real^+\ |\  \bb\cdot(q_1-q_2)=0\}.
%\cred{\cA_{\partial\cC2}:=\partial\cC\times\{(\ba,\bb)\in\cU_\perp^2\ |\ \bb\cdot(q_1-q_2)=0\}\times \Real^+\times\Real^+.}
\eeqn
Notice that $\cD_{\partial\cC2} \subseteq \Xi(\cA_{\partial\cC2})$.  Since $\Xi$ is locally Lipschitz on $\cA_{\partial\cC}$ and $\cH^{2n-1}(\cA_{\partial\cC2})=0$, it follows that $\cH^{2n-1}(\cD_{\partial\cC2})=0$.

Item \ref{IH2n10}) Consider the set
\beqn
\cA_{\partial\cC\partial}:=\{(q,z,\ba,\bb,r)\in\partial\cC\times\cA_\partial\ |\ (q-z)\cdot\bb=0\}
\eeqn
and the function $\Psi_\partial:\cA_{\partial\cC\partial}\rightarrow\cD$ defined by
\beqn
\Psi_\partial(q,z,\ba,\bb,r):=\Psi(z,\ba,\bb,r)\quad \text{for all}\ (q,z,\ba,\bb,r)\in\cA_{\partial\cC\partial}.
\eeqn
Notice that $\cD_{\partial\cC}\cap\cD_\partial\subseteq \Psi_\partial(\cA_{\partial\cC\partial})$ and $\cH^{2n-1}(\cA_{\partial\cC\partial})=0$.  It follows that $\cH^{2n-1}(\cD_{\partial\cC}\cap\cD_\partial)=0$.

Item \ref{Ieo}) This follows from Item~\ref{Iinf} and the definitions of $\cD_\text{even}$ and $\cD_\text{odd}$.

Item \ref{Idun}) This follows from Item~\ref{Iinf} and the definitions of the various sets involved.
\end{proof}

The previous result yields enough information to obtain the properties of $\cD_\text{odd}$ we require. To state the desired result, it is useful to introduce the following notation: if $A$ and $B$ are subsets of $\cD$, write
\beqn\label{subsetsim}
A\subsetsim B\quad\text{if}\quad \cH^{2n-1}(A\setminus B)=0
\eeqn
and
\beqn\label{cong}
A\cong B\quad \text{if}\quad A\subsetsim B\ \text{and}\ B\subsetsim A.
\eeqn
We now argue that $\cD_{\rm odd}$ is a locally of finite perimeter and classify is essential boundary.

\begin{proposition}\label{propFP}
The set $\cD_\text{\rm odd}$ is locally of finite perimeter.  Moreover, the essential boundary\footnote{For the definition of sets of finite perimeter and essential boundary see, for example, Ambrosio, Fusco, and Pallara \cite{AFP}.} $\partial^*\cD_\text{\rm odd}$ of this set satisfies $\partial^*\cD_\text{odd}\cong\cD_{\partial\cC1}\cup\cD_{\partial 1}$.
\end{proposition}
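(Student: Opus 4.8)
The plan is to establish the two assertions---local finiteness of perimeter and the identification of the essential boundary---by exploiting the structure already extracted in Lemma~\ref{lemmeasure}. First I would observe that, by Item~6 of that lemma, $\cD$ is the disjoint union of $\cD_\text{odd}$, $\cD_\text{even}$, $\cD_\partial$, and $\cD_\text{tan}$. Since $\cD_\text{odd}$ and $\cD_\text{even}$ are both open (Item~5) and $\cH^{n+2}(\cD_\text{tan})=0$ (Item~3)---hence a fortiori $\cH^{2n-1}(\cD_\text{tan})=0$ because $n>2$ gives $n+2<2n-1$ only when $n>3$, so a little care is needed here; in fact one wants $\cH^{2n-1}(\cD_\text{tan})=0$, which should follow from Item~3 together with the fact that $\cD_\text{tan}$ lies in the image of a locally Lipschitz map from a set that is locally $\cH^{n+2}$-finite---the topological boundary of $\cD_\text{odd}$ inside $\cD$ is contained, up to the negligible set $\cD_\text{tan}$, in $\cD_\partial$. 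Then Item~2 says $\cH^{2n-1}(\cD_\partial\cap\cE)<\infty$ for every bounded open $\cE$, so $\partial\cD_\text{odd}$ has locally finite $\cH^{2n-1}$-measure. By the Federer criterion for sets of finite perimeter (a set whose topological boundary has locally finite $\cH^{N-1}$-measure in $\Real^N$, here $N=2n$, is locally of finite perimeter), this yields the first claim.

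For the identification of $\partial^*\cD_\text{odd}$, I would argue as follows. Since $\partial^*\cD_\text{odd}\subseteq\partial\cD_\text{odd}\subseteq\cD_\partial\cup\cD_\text{tan}$ and $\cH^{2n-1}(\cD_\text{tan})=0$, it suffices to work modulo $\cH^{2n-1}$-null sets inside $\cD_\partial$. By Item~4, $\cH^{2n-1}(\cD_{\partial 2})=0$, so up to $\cH^{2n-1}$-null sets $\partial^*\cD_\text{odd}$ is contained in $\cD_{\partial 1}$. It remains to show the reverse inclusion: $\cH^{2n-1}$-almost every point of $\cD_{\partial 1}$ is a point of density $1/2$ for $\cD_\text{odd}$ (equivalently, lies in the measure-theoretic boundary, which for finite-perimeter sets agrees $\cH^{2n-1}$-a.e.\ with the essential boundary by Federer's theorem). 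The key geometric fact is that crossing $\cD_{\partial 1}$ transversally---that is, varying the disc $D(p,\bu,r)$ so that its boundary sphere $\partial D(p,\bu,r)$ sweeps across the single transverse intersection point $z\in\partial D(p,\bu,r)\cap\cC$---changes the parity of $\cH^0(D(p,\bu,r)\cap\cC)$ by exactly one, because exactly one point of $\cC$ passes from the boundary into the open disc (or out of it). This is where the exclusion of $\cD_\text{tan}$ matters: transversality of $\bt(z)$ to $\bu$ guarantees that near such a $(p,\bu,r)$ the set $\cC\cap D(p',\bu',r')$ is well controlled and its cardinality changes by precisely one as we cross. I would make this precise by choosing local coordinates: near a point of $\cD_{\partial 1}$, after straightening $\cC$ near $z$ (possible since $\cC$ is $C^1$ and the intersection is transverse), the condition ``$z\in D(p,\bu,r)$ versus $z\notin\bar D(p,\bu,r)$'' is governed by the sign of a single $C^1$ function with nonvanishing gradient (the signed distance of $z$'s image to the sphere $\partial D(p,\bu,r)$), so $\cD_\text{odd}$ and $\cD_\text{even}$ occupy, locally, the two sides of a $C^1$ hypersurface through the point. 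Consequently such a point has density $1/2$ for $\cD_\text{odd}$, hence lies in $\partial^*\cD_\text{odd}$.

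I expect the main obstacle to be the careful verification that near an arbitrary point of $\cD_{\partial 1}$ the parity function is locally constant on each side of a smooth hypersurface. The subtlety is that $\bar D(p,\bu,r)\cap\cC$ may contain \emph{other} intersection points (in the interior of the disc or elsewhere on $\partial D$) besides the distinguished boundary point $z$; one must ensure these contribute a locally constant count so that only the single point $z$ is responsible for the parity jump. Away from $\cD_\text{tan}\cup\cD_{\partial 2}$ all such interior intersections are transverse and isolated and hence locally constant in number under small perturbations of $(p,\bu,r)$, and there are no other boundary intersections; this is exactly what Items~1, 3, and~4 of Lemma~\ref{lemmeasure} buy us, and it reduces the parity computation to the single-point analysis above. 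The remaining work---assembling these local statements into the global density-$1/2$ claim and invoking Federer's theorem to pass between measure-theoretic boundary and essential boundary---is routine.
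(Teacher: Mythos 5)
Your proposal follows the same overall route as the paper's own proof: use the decomposition from Item~6 of Lemma~\ref{lemmeasure} and the openness of $\cD_\text{odd}$, $\cD_\text{even}$ (Item~5) to trap the boundary inside $\cD_\partial\cup\cD_\text{tan}$; invoke Items~2--4 to control its $\cH^{2n-1}$-measure; apply the Federer criterion for local finiteness of perimeter; and then use the density-$\{0,\tfrac12,1\}$ characterization (AFP Thm.~3.61) to identify the essential boundary with $\cD_{\partial 1}$. Two small remarks. First, your worry about passing from $\cH^{n+2}(\cD_\text{tan})=0$ to $\cH^{2n-1}(\cD_\text{tan})=0$ is unnecessary: since $n>2$ gives $n+2\le 2n-1$, the implication is immediate (vanishing of a lower-dimensional Hausdorff measure forces vanishing of all higher ones). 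Second, you state the Federer criterion with the \emph{topological} boundary; the paper uses the sharper version with the \emph{essential} boundary (Federer 4.5.11). Since $\partial^*\cD_\text{odd}\subseteq\partial\cD_\text{odd}$, your version suffices, but the essential-boundary version is what is actually invoked.

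Where your proposal genuinely adds something is in the reverse inclusion $\cD_{\partial 1}\subseteq\partial^*\cD_\text{odd}$ (up to $\cH^{2n-1}$-null sets). The paper passes over this quickly --- after applying AFP~3.61 it simply asserts that Items~5 and~6 give the coincidence of $\partial^*\cD_\text{odd}$ with $\cD_\partial\cup\cD_\text{tan}$, but a priori a point of $\cD_{\partial 1}$ could be a density-$0$ or density-$1$ point for $\cD_\text{odd}$. You identify exactly the missing geometric ingredient: crossing $\cD_{\partial 1}$ transversally sweeps the boundary sphere across the single transverse intersection point $z$ and changes the parity of $\cH^0(D\cap\cC)$ by one, while the interior and other boundary intersections contribute a locally constant count (which is what excluding $\cD_\text{tan}$ and $\cD_{\partial 2}$ buys). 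This makes each such point a density-$\tfrac12$ point, hence in $\partial^*\cD_\text{odd}$. Your local-coordinate sketch of this parity-jump argument is sound and is in fact the kind of reasoning that underpins the later normal-vector computation in Proposition~\ref{redbdy}; including it here makes the argument more self-contained than the paper's.
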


\begin{proof}
From Items \ref{Ieo} and \ref{Idun} of Lemma~\ref{lemmeasure}, we see that $\partial^* \cD_\text{odd}\subseteq \cD_{\partial\cC}\cup\cD_\text{tan}\cup \cD_\partial$.  Thus, from Items \ref{Iep}--\ref{Ipartial} of the same lemma, whenever $\cE\subseteq\cD$ is a bounded, open set we have
\beqn
\cH^{2n-1}(\partial^*\cD_\text{odd}\cap \cE)\leq \cH^{2n-1}((\cD_{\partial\cC}\cup\cD_\text{tan}\cup \cD_\partial)\cap\cE)<\infty.
\eeqn
By a result of Federer, see 4.5.11 of \cite{Fed}, we can conclude that $\cD_\text{odd}$ has finite perimeter in $\cE$.  Moreover, it is known, see Ambrosio, Fusco, and Pallara \cite{AFP} Theorem~3.61, that it follows that $\cD_\text{odd}$ has density either $0$, $1/2$, or $1$ at $\cH^{2n-1}$-a.e.~point of $\cE$, and $\partial^*\cD_\text{odd}\cap \cE$ consists of those points with density $1/2$ up to a set of $\cH^{2n-1}$-measure zero.  

We first show that $\cD_\text{tan}\setminus (\cD_{\partial\cC}\cup\cD_\partial)$ has density either 0 or 1 relative to $\cD_\text{odd}$ at $\cH^{2n-1}$-a.e.~point and, hence, cannot be part of $\partial^*\cD_\text{odd}$.  Fix $(p,\bu,r)\in\cD_\text{tan}\setminus (\cD_{\partial\cC}\cup\cD_\partial)$.  Find a small, connected neighborhood $\cN$ of $(p,\bu,r)$ in $\cD$ that is disjoint from $\cD_\partial$ and $\cD_{\partial\cC}$, which is possible since these two sets are closed in $\cD$.  From Item~\ref{Itan} of Lemma~\ref{lemmeasure}, for $\cH^{2n}$-a.e.~$(p',\bu',r')\in\cN$, the disc $D(p',\bu',r')$ is not tangent to $\cC$ and from Item~\ref{Iinf} such discs only intersect $\cC$ a finite number of times.  Consider two such discs $(p_1,\bu_1,r_1),(p_2,\bu_2,r_2)\in\cN$.  Since $\cN$ is a small neighborhood, we know that the discs $D(p_1,\bu_1,r_1)$ and $D(p_2,\bu_2,r_2)$ are close in the sense that their centers are close, their orientations are close, and their radii are close.  Since $\cN$ is connected, there is a continuous path in $\cN$ from $(p_1,\bu_1,r_1)$ to $(p_2,\bu_2,r_2)$.  In the process of going along this path, the disc $D(p_1,\bu_1,r_1)$ sweeps out a tube in $\Real^n$ until it reaches $D(p_2,\bu_2,r_2)$.  Let $\cT$ denote this tube.  The boundary of $\cT$ consists of the two discs $D(p_1,\bu_1,r_1)$  and $D(p_2,\bu_2,r_2)$ along with the side of the tube, which is obtained by starting with $\partial D(p_1,\bu_1,r_1)$ and proceeding along the path of discs until one reaches $\partial D(p_2,\bu_2,r_2)$.  Let $\cS$ denote the side of the tube.  Since $\cN$ is disjoint from $\cD_\partial$ and $\cD_{\partial\cC}$, it follows that 
\beqn\label{tubneigh}
\cT\cap\partial\cC=\emptyset\qquad\text{and}\qquad\cS\cap\cC=\emptyset.
\eeqn

We proceed by showing that $\cH^0(D(p_1,\bu_1,r_1)\cap\cC)$ and $\cH^0(D(p_2,\bu_2,r_2)\cap\cC)$ have the same parity, meaning that they are either both even or both odd, by showing that their sum is even. If both of these numbers are zero, we are done.  Let $\bar\cC_c$ denote one of the finitely many connected components of $\cC$. Now suppose that after starting at one of the endpoints of $\bar\cC_{c}$ and going along this curve, it intersects one of the discs.  As the two discs are part of the boundary of the tube $\cT$, this means that after this intersection the curve $\bar\cC_c$ has either entered or left the tube.  The latter option is not possible since this would mean that the curve would have entered the tube previously through the side $\cS$, which would violate \eqref{tubneigh}$_2$. After the curve intersects one of the discs and is inside $\cT$, it cannot end inside $\cT$ as this would violate \eqref{tubneigh}$_1$.  Thus, the curve must leave $\cT$.  This can be accomplished by either crossing one of the two discs or by leaving $\cT$ through the side $\cS$ of the tube.  However, this last option is not possible as it violates \eqref{tubneigh}$_2$. Thus, the curve $\cC_c$ must intersect one of the discs to exit $\cT$.  After the curve leaves the tube $\cT$, this argument can be repeated again and again until the other endpoint of $\cC_c$ is reached.  Regardless of how many times this is repeated, the total number of intersections the curve $\cC_c$ has with $D(p_1,\bu_1,r_1)$ and $D(p_2,\bu_2,r_2)$ is even. Repeating this argument for each connected component of $\cC$ shows that $\cH^0(D(p_1,\bu_1,r_1)\cap\cC)$ and $\cH^0(D(p_2,\bu_2,r_2)\cap\cC)$ have the same parity.  If the parity is even, then almost all discs in $\cN$ are also in $\cD_\text{even}$, and so the density of $\cD_\text{odd}$ at $(p,\bu,r)$ is 0, while if the parity is odd, then it is 1.

We conclude that $\partial^*\cD_\text{odd}\cap \cE\subsetsim(\cD_{\partial\cC}\cup \cD_\partial)\cap\cE$.  It then follows from Item \ref{Ipartial2} of the lemma that $\partial^*\cD_\text{odd}\cap \cE\subsetsim(\cD_{\partial\cC1}\cup \cD_{\partial 1})\cap\cE$. 

Next we establish that $(\cD_{\partial 1}\setminus\cD_{\partial\cC})\cap\cE\subseteq \partial^*\cD_\text{odd}\cap \cE$ by arguing that all points $(p,\bu,r)\in(\cD_{\partial 1}\setminus\cD_{\partial\cC})\cap\cE$ have density $1/2$ relative to $\cD_\text{odd}$.  First notice that $\cD_\partial=\Psi(\cA_\partial)$, where $\cA_\partial$ is defined in \eqref{Ap} and $\Psi$ is the function defined in \eqref{PsiCOV} of the Appendix.  This means that $\cD_\partial$ is an immersed submanifold of $\cD$.  Moreover, the function $\Psi$ is an embedding on the preimage of $\cD_{\partial 1}$ under $\Psi$ and, so, $\cD_{\partial 1}$ is a $(2n-1)$-dimensional embedded submanifold of $\cD$.  It follows from the definitions of the sets involved that if $(p,\bu,r)\in \cD_{\partial 1}\setminus\cD_{\partial\cC}$, then any neighborhood of $(p,\bu,r)$ in $\cD$ contains elements of $\cD_\text{odd}$ and $\cD_\text{even}$.  Putting this together with the fact that $\cD_{\partial 1}$ is a $(2n-1)$-dimensional embedded submanifold of $\cD$ and $\cD_{\partial\cC}$ is a closed set, we can conclude that the density at $(p,\bu,r)$ of $\cD_\text{odd}$ must be 1/2.  

The proof that $(\cD_{\partial\cC}\setminus\cD_{\partial1})\cap\cE\subseteq \partial^*\cD_\text{odd}\cap \cE$ uses a similar argument and, so, will be skipped.  It then follows from Items \ref{Ipartial2} and \ref{IH2n10} of Lemma \ref{lemmeasure} that $(\cD_{\partial\cC1}\cup\cD_{\partial1})\cap\cE\subseteq \partial^*\cD_\text{odd}\cap \cE$.
\end{proof}

Since $\cD_\text{odd}$ is locally a set of finite perimeter, it has an exterior unit normal at $\cH^{2n-1}$-a.e.~point of its essential boundary.  The next result describes this normal vector along the part of $\partial^*\cD_\text{odd}$ that we will need later.

\begin{proposition}\label{redbdy}
For $\cH^{2n-1}$-a.e.~$(p,\bu,r)\in\partial^*\cD_\text{\rm odd}$ such that $D(p,\bu,r)\cap\partial\cC$ is empty, there is a unique $z\in \partial D(p,\bu,r)\cap\cC$.  Moreover, for such $(p,\bu,r)$ the exterior unit-normal $\bnu(p,\bu,r)\in \Real^n\times \{\bu\}^\perp\times\Real$ is given by
\beqn
\bnu(p,\bu,r):=
\begin{cases}
\bm(p,\bu,r) & \text{if } \cH^0(D(p,\bu,r)\cap \cC \text{ is an odd number},\\
-\bm(p,\bu,r) &\text{if } \cH^0(D(p,\bu,r)\cap \cC \text{ is an even number},
\end{cases}
\eeqn
where
\beqn\label{normalm}
\bm(p,\bu,r):=\frac{\Big( z-p+\frac{(p-z)\cdot\bt}{\bu\cdot\bt}\bu, \frac{(p-z)\cdot\bt}{\bu\cdot\bt}(p-z),r\Big)}{\sqrt{ |p-z|^2 + \Big(\frac{(p-z)\cdot\bt}{\bu\cdot\bt}\Big)^2|z-p|^2 + \Big(\frac{(p-z)\cdot\bt}{\bu\cdot\bt}\Big)^2 + r^2}}
\eeqn
and $\bt$ is a tangent to $\cC$ at $z$.
\end{proposition}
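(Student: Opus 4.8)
The plan is to parametrize the relevant part of $\partial^*\cD_\text{odd}$ by a smooth map whose image is $\cD_{\partial 1}$ (up to $\cH^{2n-1}$-null sets, by Proposition~\ref{propFP}), and then compute the normal to this parametrized hypersurface directly. Concretely, I would work with the map $H$ from the proof of Item~4 of Lemma~\ref{lemmeasure}, but restricted to the codimension-one condition: a disc in $\cD_{\partial 1}$ is determined by a single contact point $z\in\cC$ together with the requirement $z\in\partial D(p,\bu,r)$, i.e. $\bu\cdot(z-p)=0$ and $|z-p|=r$. Fixing a local parametrization $z=\bgamma(s)$ of $\cC$ with $\bt=\bgamma'(s)/|\bgamma'(s)|$, the condition that a nearby disc $(p,\bu,r)$ have $\bgamma(s')\in\partial D$ for some $s'$ defines, by the implicit function theorem, a smooth hypersurface $\Sigma\subseteq\cD$ near a generic point; the disc stays on one side (belongs to $\cD_\text{odd}$ or $\cD_\text{even}$) precisely according to the parity of $\cH^0(D(p,\bu,r)\cap\cC)$, which is locally constant off $\cD_\text{tan}\cup\cD_\partial$ by Lemma~\ref{lemmeasure}, Items~5--6.

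First I would establish the uniqueness of $z$. For $\cH^{2n-1}$-a.e.\ $(p,\bu,r)\in\partial^*\cD_\text{odd}$ we have $(p,\bu,r)\in\cD_{\partial 1}$ by Proposition~\ref{propFP}, and $\cD_{\partial 1}$ is exactly the set of non-tangential discs whose boundary meets $\cC$ in precisely one point; so uniqueness of $z$ is immediate from the definition of $\cD_{\partial 1}$. Moreover $(p,\bu,r)\notin\cD_\text{tan}$ gives $\bu\cdot\bt\neq 0$, so all the denominators appearing in \eqref{normalm} are nonzero.

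Next I would compute the normal. Define $G:\cD\times I\to\Real\times\Real$ by $G(p,\bu,r,s):=\big(\bu\cdot(\bgamma(s)-p),\ |\bgamma(s)-p|^2-r^2\big)$; the hypersurface $\Sigma$ is the projection to $\cD$ of $\{G=0\}$, and eliminating $s$ shows that near a generic point $\Sigma=\{F=0\}$ where $F(p,\bu,r):=|\bgamma(s(p,\bu,r))-p|^2-r^2$ and $s(p,\bu,r)$ solves $\bu\cdot(\bgamma(s)-p)=0$. The exterior normal to $\Sigma$ is (a normalization of) the gradient $\nabla_{(p,\bu,r)}F$, and here is where the structure enters: differentiating $F$ and using that at the constraint $\partial_s\big[\bu\cdot(\bgamma(s)-p)\big]=\bu\cdot\bgamma'(s)\neq 0$, the derivative $\partial_s F = 2(\bgamma(s)-p)\cdot\bgamma'(s)$ gets weighted by the factor $-(\bgamma(s)-p)\cdot\bgamma'(s)/(\bu\cdot\bgamma'(s))$ when $s$ is eliminated via the chain rule. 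Carrying this out — writing $z=\bgamma(s)$, $\bt\parallel\bgamma'(s)$ — produces the components $\big(z-p+\tfrac{(p-z)\cdot\bt}{\bu\cdot\bt}\bu,\ \tfrac{(p-z)\cdot\bt}{\bu\cdot\bt}(p-z),\ r\big)$ up to sign and scaling, matching $\bm(p,\bu,r)$ after normalizing by the stated denominator (which is just $|\nabla F|$). I would then check that the $\bu$-component indeed lies in $\{\bu\}^\perp$ — this is where the ``correction term'' $\tfrac{(p-z)\cdot\bt}{\bu\cdot\bt}\bu$ comes from: the variation of $\bu$ must respect $|\bu|=1$, so the $\bu$-slot of the normal is the projection of the naive gradient onto $\{\bu\}^\perp$, and one verifies $\bu\cdot\bm=0$ by a short computation using $\bu\cdot(z-p)=0$.

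Finally I would fix the sign. The gradient $\nabla F$ points in the direction of increasing $F=|z-p|^2-r^2$, i.e.\ toward discs whose boundary has moved past $z$, hence toward one fixed side of $\Sigma$; whether that side is the exterior of $\cD_\text{odd}$ depends on whether $D(p,\bu,r)$ currently contains an odd or even number of intersection points, since crossing $\Sigma$ changes this count by one. Tracking this bookkeeping — a local picture of the disc sweeping across the single boundary contact point $z$, combined with the fact that $\cD_\text{odd}$ and $\cD_\text{even}$ are the two sides — yields exactly the case distinction in the statement, with $\bnu=\bm$ on the odd side and $\bnu=-\bm$ on the even side. The main obstacle I anticipate is precisely this last sign/orientation analysis: one has to argue carefully that moving in the $+\nabla F$ direction corresponds to removing (or adding) one point from $D(p,\bu,r)$ and to relate ``exterior of $\cD_\text{odd}$'' to the parity, which requires the local straight-line approximation of $\cC$ near $z$ (as in the proof of Item~1 of Lemma~\ref{lemmeasure}) to see that only the single contact point $z$ can cross the disc boundary under a small perturbation. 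The actual differentiation giving \eqref{normalm} is routine once the implicit elimination of $s$ is set up correctly.
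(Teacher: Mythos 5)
Your proposal is correct in outline and arrives at the same formula, but by a genuinely different route from the paper. The paper parametrizes $\cD_{\partial 1}$ via the map $\Psi(z,\ba,\bb,r)=(z+r\ba,\bb,r)$, differentiates curves in the domain of $\Psi$ to produce an explicit list of $2n-1$ vectors spanning the tangent space of $\cD_{\partial 1}$ at a given point, and then solves the orthogonality system for a vector in $\Real^n\times\{\bu\}^\perp\times\Real$ normal to all of them. You instead realize $\cD_{\partial 1}$ locally as the zero level set of an implicit function $F(p,\bu,r)=|\bgamma(s(p,\bu))-p|^2-r^2$ obtained by eliminating $s$ from the constraint $\bu\cdot(\bgamma(s)-p)=0$, which is legitimate precisely because non-tangentiality gives $\bu\cdot\bgamma'(s)\neq0$; the normal then appears directly as $\nabla F$. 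Carrying out the implicit differentiation indeed yields $\nabla_p s=\bu/(\bu\cdot\bgamma')$, $\nabla_\bu s=-(z-p)/(\bu\cdot\bgamma')$, and $\nabla F=-2\big((z-p)+\tfrac{(p-z)\cdot\bt}{\bu\cdot\bt}\bu,\ \tfrac{(p-z)\cdot\bt}{\bu\cdot\bt}(p-z),\ r\big)$, which is a negative multiple of the paper's $\bm$, so the method works and your sign bookkeeping (tracking one crossing of the single contact point $z$) is where the case split originates, exactly as in the paper. The tradeoff: the paper's tangent-vector method avoids the implicit function theorem but requires solving an orthogonality system; your level-set method produces the normal constructively as a gradient at the cost of setting up and differentiating the implicit elimination of $s$. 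One small misattribution in your narrative: the ``correction term'' $\tfrac{(p-z)\cdot\bt}{\bu\cdot\bt}\bu$ belongs to the $p$-slot of $\bm$, not the $\bu$-slot, and it arises from the chain-rule contribution $2\big((z-p)\cdot\bgamma'\big)\nabla_p s$ in $\nabla_p F$, not from projecting onto $\{\bu\}^\perp$. The $\bu$-slot of $\nabla F$, namely $\tfrac{(p-z)\cdot\bt}{\bu\cdot\bt}(p-z)$, is already in $\{\bu\}^\perp$ for free because $\bu\cdot(z-p)=0$ on $\partial D(p,\bu,r)$, so the sphere-constraint projection that you anticipate is actually trivial here.
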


\begin{proof}
We are looking for the exterior unit-normal to $\cD_\text{odd}$ on $\partial^*\cD_\text{odd}\setminus \cD_{\partial\cC}$.  By Proposition~\ref{propFP} and Item~\ref{IH2n10} of Lemma~\ref{lemmeasure} it suffices to find the exterior unit-normal on
\beqn
\cP:=\partial^*\cD_\text{odd}\cap (\cD_{\partial 1}\setminus \cD_{\partial\cC}).
\eeqn
As argued in Proposition~\ref{propFP}, $\cD_{\partial 1}$ is an embedded submanifold of $\cD$ and, thus, is $\cH^{2n-1}$-rectifiable.  It follows that the approximate tangent space to $\cP$, where it exists, coincides with the tangent space of $\cD_{\partial 1}$.  Thus, to calculate the exterior unit-normal on $\cP$, we first find the tangent space to $\cD_{\partial 1}$ at any point.

Let $(p,\bu,r)\in \cP$ and find the unique $(z,\ba,\bb,r)\in\cA_\partial$, see \eqref{Ap}, that gets mapped to $(p,\bu,r)$ under $\Psi$, which is defined in \eqref{PsiCOV}.  Denote by $\bt$ a unit tangent vector to $\cC$ at $z$.  Since $(p,\bu,r)\not \in \cD_\text{tan}$, we must have $\bb\cdot\bt\not =0$.  A curve in $\cA_\partial$ that passes through $(z,\ba,\bb,r)$ induces, via the mapping $\Psi$, a curve in $\cD_{\partial 1}$ that passes through $(p,\bu,r)$.  By differentiating such curves we can generate vectors in the tangent space of $\cD_{\partial 1}$ at $(p,\bu,r)$.  In particular, one can find that the following vectors are in the tangent space:
\beqn\label{TSbasis}
(\bt,\bzero,0),\quad (\bc,\bzero,0),\quad (\bzero,\bd,0),\quad (\ba,0,1),\quad (r\bb,-\ba,0),
\eeqn
where $\bc$ and $\bd$ are any vectors orthogonal to both $\ba$ and $\bb$.  This generates a list of $2n-1$ linearly independent vectors since $\bb\cdot\bt\not = 0$.  Thus, these vectors span the tangent space at $(p,\bu,r)$.  A vector in $\Real^n\times\{\bu\}^\perp\times\Real$, which is the tangent space to $\cD$ at $(p,\bu,r)$, that is orthogonal to the list of vectors in \eqref{TSbasis} is
\beqn
\big(-\ba+\tfrac{\ba\cdot\bt}{\bb\cdot\bt}\bb,r\tfrac{\ba\cdot\bt}{\bb\cdot\bt}\bb,1\big).
\eeqn
Since $(p,\bu,r)=\Psi(z,\ba,\bb,r)=(z+r\ba,\bb,r)$, we can replace $\ba$ with $(p-z)/r$ and $\bb$ with $\bu$.  Doing so and normalizing this vector results in the vector $\bm$ defined in \eqref{normalm}.  

The vector $\bm$ at $(p,\bu,r)$ is pointing outward from $\cD_\text{odd}$ if the interior of the disc associated with $(p,\bu,r)$ crosses $\cC$ an odd number of times.  To see this, let $\gamma$ be a smooth curve in $\cD$ defined on an interval of $\Real$ containing zero such that $\gamma(0)=(p,\bu,r)$ and $\gamma'(0)=\bm$.  For small negative values of $t$, we have $\gamma(t) \in \cD_\text{odd}$ since the last component of $\bm$ is positive and, so, the disc $D(\gamma(t))$ will intersect the curve an odd number of times and its boundary will not intersect the curve.  Moreover, $\gamma(t)\not\in \cD_\text{odd}$ for small positive $t$ because the curve $\cC$ will cross the disc $D(\gamma(t))$ one more time than the disc $D(p,\bu,r)$ since the boundary of this disc intersects the curve.  Using similar logic, one can see that $-\bm$ is the outward normal if $D(p,\bu,r)\cap \cC$ has an even number of points.
\end{proof}

We need to analyze one final set of discs which will be of use later. To describe it, we make use of the following notation: if $\bu$ is a vector, then let $P_\bu$ denote the orthogonal projection onto the direction $\bu$ and $P_\bu^\perp$ the orthogonal projection onto the subspace perpendicular to $\bu$.

\begin{lemma}\label{lemDR}
Given a $z\in\Real^n$, $\bt\in\cU_n$, $K>0$, and $\alpha\in(0,1)$, define the region
\beqn\label{defcR}
\cR:=\{z+s\bt+\bh\ |\ s\in\Real,\ \bh\in\Real^n,\ \bh\cdot\bt=0,\ \text{\rm and}\ |\bh|<K|s|^{1+\alpha}\}.
\eeqn
If
\beqn\label{defcDcD}
\cD_\cR:=\{(\ba,\bb,r)\in \cU^2_\perp\times (0,\infty)\ |\ \bb\cdot\bt>0\ \text{\rm and}\ D(z+r\ba,\bb,r)\cap \cR\not=\emptyset\},
\eeqn
then it is true that
\beqn\label{cDRinc}
\cD_\cR\subseteq\Big\{(\ba,\bb,r)\in \cU^2_\perp\times (0,\infty)\ \Big|\  r\geq\frac{|\bb\cdot\bt|^{1/\alpha}}{2K^{1/\alpha}}\Big\}.%\ \text{\rm and}\ 2Kr|P_\bb^\perp\bt||\ba\cdot P_\bb^\perp\bt|>|\bb\cdot\bt|\}.
\eeqn
\end{lemma}

\begin{proof}
Fix $(\ba,\bb,r)\in\cD_\cR$. First notice that this means that $|\bb\cdot\bt|\not=1$, as otherwise the disc $D(z+r\ba,\bb,r)$ would not intersect $\cR$. Consider the minimization problem
\beqn
\inf |p-z|^2,\qquad p\in \cR\cap D(z+r\ba,\bb,r).
\eeqn
Find $p\in \bar\cR\cap\bar D(z+r\ba,\bb,r)$ that achieves this infimum. Since $\cR$ is open and $\bar D(z+r\ba,\bb,r)$ is convex, it is not possible for $p\in \cR\cap \bar D(z+r\ba,\bb,r)$ and, so, we must have $p\in \partial\cR\cap \bar D(z+r\ba,\bb,r)$. The set $\partial\cR$ consists of those $q\in\Real^n$ satisfying
\beqn
|P_\bt^\perp(q-z)|^2=K^2|P_\bt(q-z)|^{2+2\alpha},
\eeqn
and $\bar D(z+r\ba,\bb,r)$ consists of those points $q$ that lie in the plane $\bb\cdot(q-z)=0$ and satisfy $|q-z-r\ba|^2\leq r^2$. It follows that there are Lagrange multipliers $\lambda_1$ and $\lambda_2$ and a KKT (Karush--Kuhn--Tucker) multiplier $\mu$ such that $p$ satisfies
\begin{align*}
2(p-z)&=2\lambda_1\big([p-z-[(p-z)\cdot\bt]\bt-(1+\alpha)K^2[(p-z)\cdot\bt]^{1+2\alpha}\bt\big)+\lambda_2\bb+2\mu(p-z-r\ba),\\
0&=|P_\bt^\perp(p-z)|^2-K^2|P_\bt(p-z)|^{2+2\alpha},\\
0&=\bb\cdot(p-z),\\
0&=(|p-z-r\ba|^2- r^2)\mu,\\
0&\geq |p-z-r\ba|^2-r^2.
\end{align*}
%If $p\in D(z+r\ba,\bb,r)$, then the method of Lagrange multipliers implies there are number $\lambda$ and $\mu$ such that
%\beqn
%2(p-z)=\lambda\bb+\mu\big([p-z-[(p-z)\cdot\bt]\bt-4K^2[(p-z)\cdot\bt]^3\bt\big)
%\eeqn
The first of the above equations implies that $p-z$ is in the subspace spanned by $\bb$, $\bt$, and $\ba$. Let $p'$ denote the projection of $p$ onto the two-dimensional plane containing $z$ and spanned by $\bb$ and $\bt$. Notice that $|p'-z|\leq|p-z|$ and $p'\in\cR$.

\begin{figure}[h]
\centering\thicklines
\hspace{-.75in}
\includegraphics[width=3in]{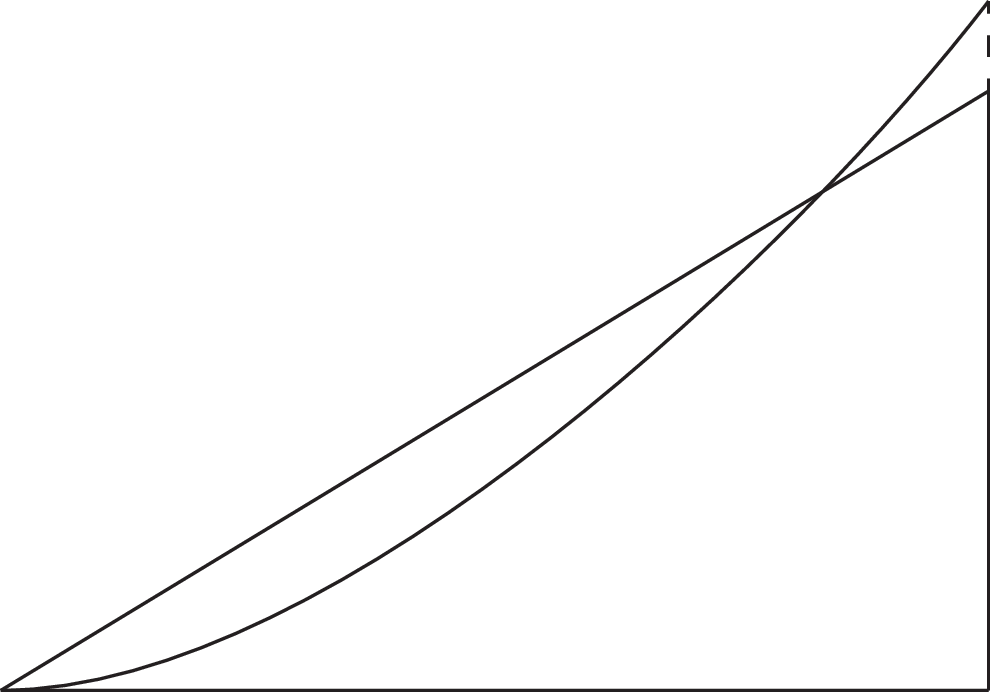}
\put(-227,-2){$z$}
\put(-219,-2){$\bullet$}
\put(-13,-13){$z+s\bt$}
\put(-3,-2){$\bullet$}
\put(0,0.5){\rotatebox[origin=c]{0}{$\vector(1,0){25}$}}
\put(20,5){$\bt$}
\put(5,129){$p'$}
\put(-3,129){$\bullet$}
\put(-150,80){$|p'-z|$}
\put(5,60){$|p'-z-s\bt|\leq K|s|^{1+\alpha}$}
\put(-110,-10){$s$}
\put(-217.5,-10){\rotatebox[origin=c]{-53}{$\vector(1,0){25}$}}
\put(-198,-15){$\bb$}
\qbezier(-199,10.8)(-190,10)(-190,1)
\put(-185,10){$\theta$}
\put(5,150){$y$}
\put(-3,148){$\bullet$}
\caption{A depiction of the geometry in the plane containing $z$ and spanned by $\bt$ and $\bb$. The straight line connecting $z$ and $p'$ is contained in the plane orthogonal to $\bb$, while the curved arc connecting $z$ and $y$ is part of $\partial\cR$. Notice that $\cos\theta=\sqrt{1-(\bb\cdot\bt)^2}$, from which \eqref{planegeo}$_1$ follows.}
\label{figplane}
\end{figure}

Analyzing the geometry within the plane containing $z$ and spanned by $\bb$ and $\bt$, see Figure~\ref{figplane}, one finds that 
\beqn\label{planegeo}
|p'-z|=\frac{|s|}{\sqrt{1-(\bb\cdot\bt)^2}}\quad \text{and}\quad K|s|^{1+\alpha}\geq |p'-z-s\bt|=\frac{|s||\bb\cdot\bt|}{\sqrt{1-(\bb\cdot\bt)^2}}.
\eeqn
The second of these identities implies that
\beqn\label{abss}
|s|\geq\Big(\frac{|\bb\cdot\bt|}{K\sqrt{1-(\bb\cdot\bt)^2}}\Big)^{1/\alpha}.
\eeqn
Since $p\in \bar D(z+r\ba,\bb,r)$, it follows from \eqref{planegeo}$_1$ and \eqref{abss} that
\beqn\label{DRrel1}
2r\geq|p-z|\geq |p'-z|\geq\frac{|\bb\cdot\bt|^{1/\alpha}}{K^{1/\alpha}\sqrt{1-(\bb\cdot\bt)^2}^{1+1/\alpha}}\geq \frac{|\bb\cdot\bt|^{1/\alpha}}{K^{1/\alpha}}.
\eeqn
It follows that $(\ba,\bb,r)$ is contained in the right-hand side of \eqref{cDRinc}.
%Now set $\bb_\perp=\frac{P^\perp_\bb\bt}{|P_\bb^\perp\bt|}$, and notice that this is the unit vector that points from $z$ to $p$. The point $z'=z+2r|\ba\cdot\bb_\perp|$ is on $\partial D(z+r\ba,\bb,r)$ as well as $\cR$. Since $p$ is the point that minimizes the distance to $z$ over all points in $\bar\cR\cap\partial D(z+r\ba,\bb,r)$, we must have $|z'-z|\geq |p-z|$. Therefore, from \eqref{planegeo}$_1$ and \eqref{abss}
%\beqn\label{DRrel2}
%2r|\ba\cdot\bb_\perp|\geq\frac{|\bb\cdot\bt|}{K\sqrt{1-(\bb\cdot\bt)^2}}.
%\eeqn
%Putting together \eqref{DRrel1}, \eqref{DRrel2}, the definition of $\bb_\perp$, and the fact that $|P_\bb^\perp\bt|=\sqrt{1-(\bb\cdot\bt)^2}$ shows that $(\ba,\bb,r)$ is contained in the right-hand side of \eqref{cDRinc}. 
\end{proof}

\begin{lemma}\label{lemIB}
Given $z\in\Real^n$, $\bt\in\cU_n$, and $\alpha>\sigma$, define $\cR$ and $\cD_\cR$ as in Lemma~\ref{lemDR}. If
\beqn\label{cDcReta}
\cD_\cR(\eta):=\{(\ba,\bb,r)\in\cD_\cR\ |\ r<\eta\},
\eeqn
then for sufficiently small $\eta>0$ there is a constant $C$ depending only on $K$, $\sigma$, $\alpha$, and $n$ such that
\beqn\label{intRbound}
\int_{\cD_\cR(\eta)}r^{-1-\sigma}d\cH^{2n-2}(\ba,\bb,r)\leq C\eta^{\alpha-\sigma}.
\eeqn
\end{lemma}

\begin{proof}
Set $\cU_n(\eta):=\{\bb\in\cU_n\ |\ |\bb\cdot\bt|\leq K (2\eta)^\alpha\}$. Using Lemma~\ref{lemDR} and the coarea formula, the integral in \eqref{intRbound} can be written using iterated integrals as
\begin{align}
\int_{\cD_\cR(\eta)}r^{-1-\sigma}d\cH^{2n-2}(\ba,\bb,r)&\leq\sqrt{2}\int_{\cU_n(\eta)}\int_{\cU_n\cap\{\bb\}^\perp}\int_{\frac{|\bb\cdot\bt|^{1/\alpha}}{2K^{1/\alpha}}}^\eta r^{-1-\sigma}drd\ba d\bb\nonumber\\
&=\frac{\sqrt{2}\omega_{n-2}}{\sigma}\int_{\cU_n(\eta)}\Big[\frac{2^\sigma K^{\sigma/\alpha}}{|\bb\cdot\bt|^{\sigma/\alpha}}-\eta^{-\sigma}\Big] d\bb.
\end{align}
To continue the calculation, we apply the coarea formula again, this time representing those vectors in $\cU_n(\eta)$ using $\bt$, a unit vector orthogonal to $\bt$, and angle (see Mihaila and Seguin \cite[Lemma A.2]{MS24}) to find that
\begin{align}
&\nonumber\int_{\cD_\cR(\eta)}r^{-1-\sigma}d\cH^{2n-2}(\ba,\bb,r)\\
&\hspace{.2in}\nonumber\leq\frac{\sqrt{2}\omega_{n-2}}{\sigma}\int_{\{\bt,-\bt\}} \int_{\cU_n\cap\{\bt\}^\perp}\int_{\arccos(K(2\eta)^\alpha)}^{\pi/2} \Big[ 2^\sigma K^{\sigma/\alpha}\cos^{-\sigma/\alpha}\theta-\eta^{-\sigma}\Big]\sin^{n-2}\theta d\theta d\cH^{n-2} d\cH^0\\
&\hspace{.2in}=\frac{2\sqrt{2}\omega_{n-2}^2}{\sigma}\int_{\arccos(K(2\eta)^\alpha)}^{\pi/2} \Big[ 2^\sigma K^{\sigma/\alpha}\cos^{-\sigma/\alpha}\theta-\eta^{-\sigma}\Big]\sin^{n-2}\theta d\theta.
\end{align}
Now use the change of variables $x=\sin^2\theta$ to find that
\begin{multline}\label{bddcov}
\int_{\cD_\cR(\eta)}r^{-1-\sigma}d\cH^{2n-2}(\ba,\bb,r)\\
\leq\frac{\sqrt{2}\omega_{n-2}^2}{\sigma}\int_{1-K^2(2\eta)^{2\alpha}}^1[2^\sigma K^{\sigma/\alpha}(1-x)^{-\frac{1}{2}-\frac{\sigma}{2\alpha}}-\eta^{-\sigma}(1-x)^{-1/2}x^\frac{n-3}{2}]dx.
\end{multline}
We will obtain bounds on each of the two terms on the right-hand side of the previous inequality separately. For the first term, the integral with respect to $x$ can be calculated exactly. Once this is done, the fact that $\eta^{\frac{1}{2}-\frac{\sigma}{2\alpha}}\leq \eta^{\alpha-\sigma}$ for small $\eta$ shows that
\beqn\label{bddcov1}
\int_{1-K^2(2\eta)^{2\alpha}}^1(1-x)^{-\frac{1}{2}-\frac{\sigma}{2\alpha}} dx\leq \frac{2^{1+\frac{\alpha-\sigma}{2\alpha}}  \alpha K^{\alpha-\sigma}}{\alpha-\sigma}\eta^{\alpha-\sigma}.
\eeqn
To bound the second term in \eqref{bddcov}, first since $n\geq 2$ and $1-K^2(2\eta)^{2\alpha}\leq x\leq 1$, it follows that for sufficiently small $\eta$,
\beqn
x^{\frac{n-3}{2}}\leq x^{-1/2}\leq (1-K^2(2\eta)^{2\alpha})^{-1/2}\leq 2.
\eeqn
Upon using this fact, the integral in the second term of the right-hand side of \eqref{bddcov} can be computed to obtain
\beqn\label{bddcov2}
\int_{1-K^2(2\eta)^{2\alpha}}^1\eta^{-\sigma}(1-x)^{-1/2}x^\frac{n-3}{2}dx\leq 4\sqrt{2}K\eta^{\alpha-\sigma}.
\eeqn
Upon using \eqref{bddcov1} and \eqref{bddcov2} in \eqref{bddcov}, one obtains the desired bound \eqref{intRbound}.
\end{proof}

\section{Fractional length}\label{sectFL}

In this section we define a fractional notion of length and show that in an appropriate limit as $\sigma$ goes to 1, this converges to the $\cH^1$ measure up to a multiplicative constant.

Let $\Omega$ be an open, bounded set that contains $\bar\cC$.  Motivated by \eqref{FALmot}, given the curve $\cC$, define the $\sigma$-length of $\cC$ relative to $\Omega$ by
\beqn\label{defLens1}
\text{Len}_\sigma(\cC,\Omega):=\int_{\cD(\cC)} r^{1-n-\sigma}\sup_{\ba\in\cU_n\cap\{\bu\}^\perp} \chi_\Omega(p+r\ba) d\cH^{2n}(p,\bu,r),
\eeqn
where $\cD(\cC):=\cD_\text{odd}$. Using the definition
\beqn\label{defDO}
\cD_\Omega(\cC):=\{(p,\bu,r)\in \cD(\cC)\ |\ \sup_{\ba\in\cU_n\cap\{\bu\}^\perp} \chi_\Omega(p+r\ba)=1\},
\eeqn
the fractional length can be rewritten as
\beqn\label{defLens2}
\text{Len}_\sigma(\cC,\Omega)=\int_{\cD_\Omega(\cC)} r^{1-n-\sigma} d\cH^{2n}(p,\bu,r).
\eeqn

To show that this definition yields a finite number, first notice that
\beqn\label{Psiinc}
\cD_\Omega(\cC)\subseteq \Xi \Big(\bigcup_{\xi\in\Real^+} \cC\times \cU_\perp^2\times\{\xi\}\times[\xi,\xi + d(\Omega)]\Big),
\eeqn
where $d(\Omega)$ is the diameter of $\Omega$ and $\Xi$ is defined in \eqref{XiCOV} of the Appendix.  To see this, consider $(p,\bu,r)\in\cD_\Omega(\cC)$.  Since $D(p,\bu,r)$ intersects $\cC$ a finite number of times, we can find $z\in\cC\cap D(p,\bu,r)$ with minimum distance to $p$ such that $(p-z)\cdot \bu=0$.  Set $\xi=|p-z|$, $\ba=(p-z)/\xi$, and $\bb=\bu$.  It follows that $\Xi(z,\ba,\bb,\xi,r)=(p,\bu,r)$.  Since $z$ is the closest point on $\cC$ to $p$ in $D(p,\bu,r)$, we must have $\xi\leq r$ otherwise $D(p,\bu,r)$ would not intersect $\cC$.  Moreover, $r\leq \xi+d(\Omega)$ since if this were not true then $\partial D(p,\bu,r)\cap \Omega=\emptyset$.  It follows that $(p,\bu,r)$ is an element of the set on the right-hand side of \eqref{Psiinc}, so \eqref{Psiinc} holds. Thus, we can utilize the change of variables formula \eqref{XiCOV2} in the Appendix to find that
\begin{align*}
\text{Len}_\sigma(\cC,\Omega)&\leq \int_{\cC}\int_0^\infty\int_{\cU_\perp^2}\int_\xi^{\xi+d(\Omega)} 2^{-1/2}r^{1-n-\sigma}\xi^{n-2} |\bb\cdot \bt(z)|drd\cH^{2n-3}(\ba,\bb) d\xi dz\\
&\leq \cH^1(\cC) \cH^{2n-3}(\cU_\perp^2) \int_0^\infty\int_\xi^{\xi+d(\Omega)} r^{1-n-\sigma}\xi^{n-2} drd\xi\\
&=\frac{1}{2-n-\sigma}\cH^1(\cC) \cH^{2n-3}(\cU_\perp^2) \int_0^\infty\Big( \frac{\xi^{n-2}}{(\xi+d(\Omega))^{n-2+\sigma}}-\xi^{-\sigma}\Big) d\xi,
\end{align*}
and the remaining integral involving $\xi$ is finite.

The next goal is to show that the fractional length converges in an appropriate limit to the classical notion of length up to some multiplicative constant.  Doing so will require the following result.

\begin{lemma}\label{lemint}
For any $\bc\in\cU_n$, we have
\beqn
\int_{\cU^2_\perp} |\bb\cdot \bc| d\cH^{2n-3}(\ba,\bb)=\frac{4\sqrt{2}\pi^{n-1}}{\Gamma(\tfrac{n+1}{2})\Gamma(\tfrac{n-1}{2})}
\eeqn
where $\Gamma$ is the gamma function.
\end{lemma}

\begin{proof}
First notice that for any vector $\bv\in\Real^{n-1}$, by the area formula we have
\beqn
\int_{\cU_{n-1}} |\bb\cdot \bv| d\bb=2\int_{\cU_{n-2}}\int_0^{\pi/2} |\bv|\cos\theta(\sin\theta)^{n-3} d\theta d\cH^{n-2}=\frac{\omega_{n-3}\Gamma(\tfrac{n-2}{2})|\bv|}{\Gamma(\tfrac{n}{2})}=\frac{2\pi^{\tfrac{n-2}{2}}|\bv|}{\Gamma(\tfrac{n}{2})},
\eeqn
where
\beqn
\omega_{n-3}=\cH^{n-3}(\cU_{n-2})=\frac{2\pi^{\tfrac{n-2}{2}}}{\Gamma(\tfrac{n-2}{2})}.
\eeqn
Letting $P_\ba$ denote the projection onto the plane orthogonal to $\ba$, we can compute using the coarea and area formulas that
\begin{align*}
\int_{\cU^2_\perp} |\bb\cdot \bc| d\cH^{2n-3}(\ba,\bb)&=\int_{\cU_n}\int_{\cU_n\cap\{\ba\}^\perp} \sqrt{2}|\bb\cdot P_\ba \bc| d\bb d\ba\\
& = \frac{2\sqrt{2}\pi^{\tfrac{n-2}{2}}}{\Gamma(\tfrac{n}{2})} \int_{\cU_n} |P_\ba\bc| d\ba\\
& = \frac{4\sqrt{2}\pi^{\tfrac{n-2}{2}}}{\Gamma(\tfrac{n}{2})} \int_{\cU_{n-1}}\int_0^{\pi/2} \sin\theta (\sin\theta)^{n-2} d\theta d\cH^{n-1}\\
& = \frac{4\sqrt{2}\pi^{\tfrac{n-2}{2}}}{\Gamma(\tfrac{n}{2})} \frac{\omega_{n-2}\Gamma(\tfrac{n}{2})\Gamma(\tfrac{1}{2})}{2\Gamma(\tfrac{n+1}{2})}\\
&=\frac{4\sqrt{2}\pi^{n-1}}{\Gamma(\tfrac{n+1}{2})\Gamma(\tfrac{n-1}{2})}.
\end{align*}
\end{proof}

\begin{theorem}\label{Thmlim}
If $\Omega\subseteq\Real^n$ is any open, bounded set such that $\bar\cC\subseteq\Omega$, then
\beqn\label{limresult}
\lim_{\sigma\uparrow 1} (1-\sigma){\rm Len}_\sigma(\cC,\Omega)=\frac{4\pi^{n-1}}{\Gamma(\tfrac{n+1}{2})\Gamma(\tfrac{n-1}{2})(n-1)}\cH^1(\bar\cC).
\eeqn
\end{theorem}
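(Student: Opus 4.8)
The plan is to squeeze $(1-\sigma)\text{Len}_\sigma(\cC,\Omega)$ between a $\limsup$ bound and a $\liminf$ bound that coincide. Since $n>2$ and $\sigma<1$ we have $1-n-\sigma<0$, so $r^{1-n-\sigma}\to\infty$ as $r\downarrow0$; consequently, as $\sigma\uparrow1$ the integral defining $\text{Len}_\sigma$ concentrates on discs of arbitrarily small radius, and a small disc that meets $\cC$ an odd number of times meets it (off a negligible set) exactly once while sitting in a thin tube around $\cC$. The two tools I would use throughout are the change-of-variables map $\Xi$ of the Appendix --- which rewrites $\cH^{2n}$ on $\cD$ in the coordinates ``base point $z\in\cC$, offset $\xi\ge0$, orthonormal pair $(\ba,\bb)\in\cU_\perp^2$, radius $r$'', with Jacobian factor $\xi^{n-2}|\bb\cdot\bt(z)|$, exactly as in the estimate already recorded just after \eqref{defLens2} --- and Lemma~\ref{lemint}, which evaluates $\int_{\cU_\perp^2}|\bb\cdot\bt(z)|\,d\cH^{2n-3}(\ba,\bb)=4\alpha_{n-1}\alpha_{n-2}$ independently of $z$.

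For the upper bound I would return to the first inequality in the estimate following \eqref{defLens2} and, instead of bounding $|\bb\cdot\bt(z)|\le1$, integrate out the $(\ba,\bb)$-variables using Lemma~\ref{lemint}, obtaining
$$\text{Len}_\sigma(\cC,\Omega)\ \le\ 4\alpha_{n-1}\alpha_{n-2}\,\cH^1(\cC)\int_0^\infty\xi^{n-2}\int_\xi^{\xi+d(\Omega)}r^{1-n-\sigma}\,dr\,d\xi.$$
The inner integral equals $\tfrac1{n-2+\sigma}\big(\xi^{2-n-\sigma}-(\xi+d(\Omega))^{2-n-\sigma}\big)$, and splitting the $\xi$-integral at $1$ one checks that the part over $(0,1)$ contributes $\tfrac1{n-2+\sigma}\big(\tfrac1{1-\sigma}+O(1)\big)$ as $\sigma\uparrow1$ while the part over $(1,\infty)$ stays bounded (the integrand decays like $\xi^{-1-\sigma}$ at infinity). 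Hence $(1-\sigma)$ times the right-hand side tends to $\tfrac{4\alpha_{n-1}\alpha_{n-2}}{n-1}\cH^1(\cC)$, giving $\limsup_{\sigma\uparrow1}(1-\sigma)\text{Len}_\sigma(\cC,\Omega)\le\tfrac{4\alpha_{n-1}\alpha_{n-2}}{n-1}\cH^1(\cC)$.

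For the lower bound I would fix $\delta\in(0,1)$ smaller than the distance from $\cC$ to $\Real^n\setminus\Omega$ and let $\cD_\delta^1\subseteq\cD_{\rm odd}$ be the set of discs of radius $r<\delta$ that meet $\cC$ exactly once. For such a disc the point $p+r(p-z)/|p-z|$ of $\partial D(p,\bu,r)$ lies within $r<\delta$ of the unique intersection point $z\in\cC$, hence inside $\Omega$, so $\cD_\delta^1\subseteq\cD_\Omega(\cC)$ and therefore $\text{Len}_\sigma(\cC,\Omega)\ge\int_{\cD_\delta^1}r^{1-n-\sigma}\,d\cH^{2n}$. On $\cD_\delta^1$ the map $\Xi$ is a bijection onto $E_\delta\setminus\cD_\delta^{\rm bad}$, where $E_\delta:=\{(z,\xi,\ba,\bb,r):z\in\cC,\ (\ba,\bb)\in\cU_\perp^2,\ 0\le\xi<r<\delta\}$ and $\cD_\delta^{\rm bad}$ is the subset of $E_\delta$ whose disc meets $\cC$ at two or more points. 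Applying $\Xi$, the area formula, and Lemma~\ref{lemint} over all of $E_\delta$ gives $4\alpha_{n-1}\alpha_{n-2}\,\cH^1(\cC)\int_0^\delta r^{1-n-\sigma}\tfrac{r^{n-1}}{n-1}\,dr=\tfrac{4\alpha_{n-1}\alpha_{n-2}}{n-1}\cH^1(\cC)\tfrac{\delta^{1-\sigma}}{1-\sigma}$, whose $(1-\sigma)$-multiple converges to $\tfrac{4\alpha_{n-1}\alpha_{n-2}}{n-1}\cH^1(\cC)$ because $\delta^{1-\sigma}\to1$. So it remains only to show that $(1-\sigma)\int_{\cD_\delta^{\rm bad}}r^{1-n-\sigma}\xi^{n-2}|\bb\cdot\bt(z)|\,d\cH^{2n-3}(\ba,\bb)\,d\xi\,dr\,dz\to0$ as $\sigma\uparrow1$.

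To control $\cD_\delta^{\rm bad}$ I would, via the area formula, identify that integral with $\int k\,r^{1-n-\sigma}\,d\cH^{2n}$ taken over discs of radius $<\delta$ meeting $\cC$ at $k\ge2$ points, which is at most $\int k(k-1)\,r^{1-n-\sigma}\,d\cH^{2n}$ over the same family. Now parametrise such a disc by an ordered pair $(w_1,w_2)$ of distinct points of $D(p,\bu,r)\cap\cC$, a unit normal $\bu$ with $\bu\cdot(w_2-w_1)=0$, a centre $p\in w_1+\{\bu\}^\perp$ with $|p-w_1|,|p-w_2|<r$, and a radius $r$ between $\max\{|p-w_1|,|p-w_2|\}$ and $\delta$; this is a change of variables of the same kind as those in the Appendix, with bounded Jacobian. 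The $p$-integral then contributes a factor $O(r^{n-1})$ and the $\bu$-integral a constant, and since $\max\{|p-w_1|,|p-w_2|\}\ge\tfrac12|w_1-w_2|$ one is left with a bound of the shape
$$C_n\int_{\{(w_1,w_2)\in\cC\times\cC\,:\,0<|w_1-w_2|<2\delta\}}\frac{\delta^{1-\sigma}-\big(\tfrac12|w_1-w_2|\big)^{1-\sigma}}{1-\sigma}\,d\cH^1(w_1)\,d\cH^1(w_2).$$
Multiplying by $1-\sigma$ turns the integrand into $\delta^{1-\sigma}-(\tfrac12|w_1-w_2|)^{1-\sigma}$, which is dominated by $\delta^{1-\sigma}\le1$ and tends to $0$ pointwise off the diagonal; since $\cH^1\otimes\cH^1$ is finite on $\cC\times\cC$, dominated convergence gives the claim. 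This yields $\liminf_{\sigma\uparrow1}(1-\sigma)\text{Len}_\sigma(\cC,\Omega)\ge\tfrac{4\alpha_{n-1}\alpha_{n-2}}{n-1}\cH^1(\cC)$, and combined with the upper bound this is \eqref{limresult}. I expect the delicate point to be exactly this last step --- constructing the change of variables that parametrises a small disc by a pair of its intersection points with $\cC$ and verifying that its Jacobian is bounded, so that the bad set is genuinely controlled by an integral over pairs of nearby points of $\cC$ with the $(1-\sigma)$-gain shown above; the remaining ingredients (the radius bounds $0\le\xi<r\le\xi+d(\Omega)$ and $r<\delta$ delimiting the domains, the multiplicity bookkeeping, and the one-variable asymptotics) are routine.
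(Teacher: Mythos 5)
Your overall strategy (squeeze between an upper bound obtained from the crude inclusion \eqref{Psiinc} sharpened by Lemma~\ref{lemint}, and a lower bound obtained by integrating exactly over the full ``tube'' $\{0\le\xi<r<\delta\}$ and subtracting the discs with multiple intersections) is sound and genuinely different from the paper's proof, and your upper bound and the main term of your lower bound are correct. The gap is in the one step you yourself flag as delicate: the claim that the parametrisation of a small disc by an ordered pair $(w_1,w_2)$ of its intersection points with $\cC$ is ``a change of variables of the same kind as those in the Appendix, with bounded Jacobian.'' It is not routine. Passing from the incidence set $\{(w_1,w_2,p,\bu,r): w_1,w_2\in D(p,\bu,r)\cap\cC\}$ to fibered integrals over $\cC\times\cC$ requires controlling the ratio of the Jacobian of the projection onto $(p,\bu,r)$ to the coarea factor of the projection onto $(w_1,w_2)$. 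Because the constraint $(w_2-p)\cdot\bu=0$ couples $w_2$ to $\bu$ through the chord $w_2-w_1$, moving $w_2$ at unit speed while holding $w_1$ forces $\bu$ to rotate at speed $|\bt(w_2)\cdot\bu|/|P(w_2-w_1)|$, which a priori degenerates like $1/|w_1-w_2|$ near the diagonal; this would insert a factor $|w_1-w_2|^{-1}$ into your pair integral and destroy the dominated-convergence argument, since $\int_{\cC\times\cC}|w_1-w_2|^{-1}$ diverges. The degeneration is in fact compensated by the geometric fact that $\bu\perp(w_2-w_1)$ forces $\bt(w_i)\cdot\bu=O(|w_1-w_2|)$, but that compensation is quantitative only for curves with bounded curvature ($C^{1,1}$ or $C^2$), whereas the theorem assumes only $C^1$. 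As written, the bad-set estimate is therefore not established.

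The good news is that the bad set can be controlled without any two-point change of variables. Work in the $\Xi$-coordinates you already have: for $\cH^{2n-3}$-a.e.\ $(z,\ba,\bb)$ with $\bb\cdot\bt(z)\ne0$, the function $s\mapsto(\phi(s)-z)\cdot\bb$ is strictly monotone near $z$ and the other strands of the compact embedded curve stay at positive distance from $z$, so there is $\ve_0(z,\ba,\bb)>0$ such that every disc $D(z+\xi\ba,\bb,r)$ with $\xi<r<\ve_0$ meets $\cC$ only at $z$. Hence the fiber of $\cD_\delta^{\rm bad}$ over $(z,\ba,\bb)$ is contained in $\{(\xi,r):\min\{\ve_0,\delta\}\le r<\delta,\ \xi<r\}$, and
\begin{equation*}
(1-\sigma)\int_{\{\min\{\ve_0,\delta\}\le r<\delta\}}r^{1-n-\sigma}\xi^{n-2}\,d\xi\,dr=\frac{\delta^{1-\sigma}-\min\{\ve_0,\delta\}^{1-\sigma}}{n-1}\longrightarrow 0,
\end{equation*}
with the left-hand side bounded by $1/(n-1)$ uniformly in $\sigma$; dominated convergence in $(z,\ba,\bb)$ then kills the bad set. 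This pointwise-in-$(z,\ba,\bb)$ argument is essentially what the paper does: instead of restricting to singly-intersecting discs and estimating the remainder, the paper chooses a selection function $c$ assigning to each small disc one intersection point, observes that for fixed $(z,\ba,\bb)$ the fiber $C_\ve(z,\ba,\bb)$ eventually fills the whole triangle $\{0\le\xi\le r\le\ve\}$, and passes to the limit under the integral. Your two-sided squeeze is a perfectly viable alternative architecture, but its bad-set step should be done fiberwise as above rather than via the pair parametrisation.
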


\begin{proof}
Begin by setting $\ve:=(1-\sigma)^{1/n}$ and
$$\cD_\ve(\cC):=\{(p,\bu,r)\in\cD(\cC)\ |\ r\leq \ve\}.$$
One can show that
\beqn
\cD_\Omega(\cC)\setminus \cD_\ve(\cC)\subseteq \Xi \Big(\bigcup_{\xi\in\Real^+} \cC\times \cU_\perp^2\times\{\xi\}\times[\max\{\xi,\ve\},\xi + d(\Omega)]\Big),
\eeqn
using an argument similar to that justifying \eqref{Psiinc}.  Thus, using the change of variables \eqref{XiCOV2} there is a constant $C_n$ depending on $\cC$ and $n$ such that
\begin{align*}
\int_{\cD_\Omega(\cC)\setminus \cD_\ve(\cC)} r^{1-n-\sigma}d\cH^{2n}(p,\bu,r)& \leq C_n \int_0^\infty\int_{\max\{\xi,\ve\}}^{\xi + d(\Omega)} \xi^{n-2} r^{1-n-\sigma} dr d\xi \\
&=\frac{C_n}{n+\sigma-2} \Big [ \int_0^\ve \xi^{n-2}[\ve^{2-n-\sigma}-(\xi+d(\Omega))^{n-2-\sigma}] d\xi\\
&\qquad+ \int_\ve^\infty [\xi^{-\sigma}-\xi^{n-2}(\xi+d(\Omega))^{2-n-\sigma}]d\xi\Big].
\end{align*}
Since
\beqn
\int_0^\ve \xi^{n-2}[\ve^{2-n-\sigma}-(\xi+d(\Omega))^{n-2-\sigma}] d\xi\leq \frac{\ve^{1-\sigma}}{n-1}\\
\eeqn
and
\beqn
\int_\ve^\infty [\xi^{-\sigma}-\xi^{n-2}(\xi+d(\Omega))^{2-n-\sigma}]d\xi\leq \frac{d(\Omega)(n+\sigma-2)\ve^{-\sigma}}{\sigma},
\eeqn
it follows that
\beqn
\lim_{\sigma\uparrow 1} (1-\sigma)\int_{\cD_\Omega(\cC)\setminus \cD_\ve(\cC)} r^{1-n-\sigma}d\cH^{2n}(p,\bu,r) = 0.
\eeqn
Thus,
\beqn\label{lim0}
\lim_{\sigma\uparrow 1} (1-\sigma)\text{Len}_\sigma(\cC,\Omega)=\lim_{\sigma\uparrow 1}\int_{\cD_\ve(\cC)}(1-\sigma)r^{1-n-\sigma} d\cH^{2n}(p,\bu,r).
\eeqn

Each $(p,\bu,r)\in\cD_\ve(\cC)$ may intersect $\cC$ multiple times, however we know it intersects $\cC$ at least once.  Thus, we can arbitrarily associate each $(p,\bu,r)$ with some point $z\in \cC$.  Let $c(p,\bu,r)$ denote the selected point in $\cC$. We can think of $c$ as a mapping from $\cD_\ve(\cC)$ to $\cC$.  Many such mappings exist, but here we select one.  For $z\in\cC$ and $(\ba,\bb)\in\cU_\perp^2$ set
\beqn
C_\ve(z,\ba,\bb):=\{(\xi,r)\in\Real^+\times\Real^+\ |\ (z+\xi\ba,\bb,r)\in\cD_\ve(\cC)\ \text{and}\ c(z+\xi\ba,\bb,r)=z\}.
\eeqn
It follows from the definition of $c$ that the function $\Xi$ defined in \eqref{XiCOV} is injective on the set
\beqn
\bigcup_{(z,\ba,\bb)\in \cC\times \cU_\perp^2} \{z\}\times\{\ba\}\times\{\bb\}\times C_\ve(z,\ba,\bb).
\eeqn
Thus, by the change of variables \eqref{XiCOV2} we have
\begin{multline}\label{lim1}
\int_{\cD_\ve(\cC)} r^{1-n-\sigma}d\cH^{2n}(p,\bu,r) \\= \int_\cC\int_{\cU_\perp^2} \int_{C_\ve(z,\ba,\bb)} 2^{-1/2} r^{1-n-\sigma} \xi^{n-2} |\bb\cdot \bt(z)| d\cH^2(\xi,r) d\cH^{2n-3}(\ba,\bb) dz.
\end{multline}
Since $\cC$ is a $C^1$ curve, for all $(z,\ba,\bb)$ such that $\ba$ is not parallel to $\bt(z)$ there is a $\ve_0$ such that if $\ve\leq\ve_0$ we have
\beqn
C_\ve(z,\ba,\bb)=\{(\xi,r)\in\Real^+\times\Real^+\ |\ \xi\in[0,\ve]\ \text{and}\ r\in[\xi,\ve]\}.
\eeqn
Thus,
\begin{align*}
\lim_{\sigma\uparrow 1}(1-\sigma) \int_{C_\ve(z,\ba,\bb)} r^{1-n-\sigma} \xi^{n-2} d\cH^2(\xi,r) & = \lim_{\sigma\uparrow 1}(1-\sigma) \int_{0}^\ve\int_\xi^\ve r^{1-n-\sigma} \xi^{n-2} drd\xi\\
&= \frac{1}{n-1}.
\end{align*}
Putting this together with \eqref{lim0} and \eqref{lim1} we find
\beqn
\lim_{\sigma\uparrow 1} (1-\sigma)\text{Len}_\sigma(\cC,\Omega) = \frac{2^{-1/2}}{n-1} \int_\cC\int_{\cU_\perp^2} |\bb\cdot \bt(z)| d\cH^{2n-3}(\ba,\bb) dz.
\eeqn
With the help of Lemma~\ref{lemint}, we obtain \eqref{limresult}.
\end{proof}

%%%%%%%%%%%%%%%%%%%%%

\section{Variation of Len$_\sigma$ and nonlocal curvature}\label{sectVar}

This section is dedicated to computing the Euler--Lagrange equation associated with the functional Len$_\sigma$ and using this to define a fractional notion of curvature. To compute this, we will use what is known as a transport theorem.  The version of this transport theorem applicable here can be found in the Appendix. 

\begin{theorem}
Assume that $\cC$ has $C^{1,\alpha}$, $\alpha>\sigma$, regularity. Choose an orientation for $\cC$ and let $\bt(z)$ be the unit tangent to $\cC$ at $z$ associated with this orientation. A necessary and sufficient condition for the vanishing of the first variation of $\text{Len}_\sigma(\cC,\Omega)$ with respect to curves with the same boundary as $\cC$ is that for all $z\in\cC$,\footnote{Here $\bb\wedge\ba$ is the linear mapping defined by $(\bb\wedge\ba)\bu:=(\ba\cdot\bu)\bb-(\bb\cdot\bu)\ba$ for all $u\in\Real^n$.}
\beqn\label{LenEL}
\lim_{\ve\rightarrow 0}\Big(\int_{\cA_\text{\rm o}^+(z,\ve)}-\int_{\cA_\text{\rm e}^+(z,\ve)}\Big) \frac{(\bb\wedge\ba)\bt(z)}{r^{1+\sigma}}d\cH^{2n-2}(\ba,\bb,r)=\bzero,
\eeqn
where 
\begin{align}
\cA_\text{\rm o}^+(z,\ve)&:=\{(\ba,\bb,r)\in\cU_\perp^2\times (\ve,\infty)\ |\ \cH^0(D(z+r\ba,\bb,r)\cap \cC) \text{ is odd}, \bb\cdot\bt(z)>0\},\\
\cA_\text{\rm e}^+(z,\ve)&:=\{(\ba,\bb,r)\in\cU_\perp^2\times (\ve,\infty)\ |\ \cH^0(D(z+r\ba,\bb,r)\cap \cC) \text{ is even}, \bb\cdot\bt(z)>0\}.
\end{align}
%The integral in \eqref{LenEL} must be computed in the principle value sense.
\end{theorem}

\begin{proof}
Fix $\eta>0$, and define
\beqn
f_\eta(p,\bu,r):=g_\eta(r)\sup_{\ba\in\cU_n\cap\{\bu\}^\perp} \chi_\Omega(p+r\ba)\quad \text{for all}\ (p,\bu,r)\in\cD,
%\max\{\eta^{1-n-\sigma},r^{1-n-\sigma}\sup_{\ba\in\cU_n\cap\{\bu\}^\perp} \chi_\Omega(p+r\ba)\},
\eeqn
where
\beqn
g_\eta(r):=
\begin{cases}
0 & \text{if}\ r\leq \eta,\\
r^{1-n-\sigma} & \text{if}\ r>\eta,
\end{cases}
\eeqn
so that $f_\eta$ is bounded. Furthermore, define the `truncated' $\sigma$-length by
\beqn
\text{Len}_{\sigma,\eta}(\cC,\Omega):=\int_{\cD(\cC)} f_\eta\, d\cH^{2n}.
\eeqn
To establish the desired result, we will compute the first variation of this truncated $\sigma$-length, and then send $\eta$ to zero.

Fix $z_\circ \in \cC$ and consider $\bw\in C^{\infty}_c(\Real^n,\Real^n)$ such that $\bw(z_\circ)\not=\bzero$ and $\text{supp}(\bw)$ does not intersect $\partial\cC$. For each $t\in \Real$ define the set
\beqn
\cC_t:=\{z+t\bw(z)\ |\ z\in\cC\}.
\eeqn
Due to the regularity of $\cC$, there is a sufficiently small interval $\cI$ containing zero such that for $t\in\cI$, $\cC_t$ is a $C^{1,\alpha}$ curve contained in $\Omega$. For each $t\in \cI$ and $z\in\cC_t$, there is a parameterization of $\cC_t$ near $z$ of the form
\beqn\label{phitz}
\phi_{t,z}(s)=z+s\bt(z)+\bh_{t,z}(s),\qquad s\in \cJ_{t,z},
\eeqn
where $\cJ_{t,z}$ is an interval containing zero and $\bh_{t,z}:\cJ_{t,z}\rightarrow\Real^n$ satisfies 
\beqn\label{fzprop}
\bh_{t,z}(0)=\textbf{0},\quad \bh_{t,z}'(0)=\textbf{0},\quad \text{and}\quad \bh_{t,z}(s)\cdot \bt(z)=0\ \text{for all } s\in\cJ_{t,z}.
\eeqn
As $\cC_t$ is $C^{1,\alpha}$, so are the functions $\bh_{t,z}$. The interval $\cI$ can be chosen small enough so that (i) there is a single interval $\cJ$ containing zero such that $\cJ\subseteq\cJ_{t,z}$ for all $t\in \cI$ and $z\in \text{supp}(\bw)$ and (ii) the H\"older constants for $\bh_{t,z}'$ can be chosen uniformly in $t\in \cI$ and $z\in \text{supp}(\bw)$. Thus, by \eqref{fzprop}$_{1,2}$ and the characterization of $C^{1,\alpha}$ functions by Anderson \cite{A97}, there is a $K>0$ such that 
\beqn\label{uniformK}
|\bh_{t,z}(s)|\leq K|s|^{1+\alpha},\qquad  s\in\cJ,\ t\in \cI.
\eeqn
For $z\in\cC_t$, let $\cR(z)$ be the set defined in \eqref{defcR} with the dependence on $z$ being made explicit, $\bt$ replaced by $\bt(z)$, and $K$ being as in \eqref{uniformK}. The fact that $\phi_{t,z}$ is a local parameterization for $\cC_t$ near $z$, \eqref{fzprop}$_3$ and \eqref{uniformK} imply that there is a $R>0$ such that if $B(z,R)$ is the open ball of radius $R$ centered at $z$, then
\beqn\label{cCcR}
B(z,R)\cap\cC_t\subseteq \overline{\cR(z)}\quad\text{for any}\ z\in\cC_t\cap \text{supp}(\bw),\ t\in\cI.
\eeqn
%%%%%%%%%%%
%\cred{
%Let $\phi:\cJ\rightarrow\Real^n$, with $\cJ$ an open interval of $\Real$, be a parameterization for part of $\cC$ near $z_\circ$. Find $s_\circ\in\cJ$ such that $\phi(s_\circ)=z_\circ$. For each $t\in \cI$, define $\phi_t$ by
%\beqn
%\phi_t(s):=\phi(s)+t\bw(\phi(s)),\qquad s\in\cJ.
%\eeqn
%This defines a parameterization for $\cC_t$ near $\phi_t(s_\circ)$. As $\bw$ is smooth with compact support and $\phi$ is $C^{1,\alpha}$, $\phi_t$ is also $C^{1,\alpha}$. Moreover, by choosing $\cI$ and $\cJ$ small enough, the H\"older constant associated with $\phi_t$ can be chosen independent of $t$. It follows that there is a constant $K>0$ such that
%\beqn\label{uniformK}
%|\phi_t(s')-\phi_t(s)-\phi'_t(s)(s'-s)|\leq K|s'-s|^{1+\alpha},\qquad s_1,s_2\in \cJ,\ t\in\cI.
%%|\phi_t'(s_1)-\phi_t'(s_2)|\leq K|s_1-s_2|^\alpha,\qquad s_1,s_2\in \cJ, t\in\cI.
%\eeqn
%For $z\in\cC_t$, let $\cR(z)$ be the set defined in \eqref{defcR} with the dependence on $z$ being made explicit, $\bt$ replaced by $\bt(z)$, the tangent to $\cC_t$ at $z$, and $K$ being as in \eqref{uniformK}. The condition \eqref{uniformK} implies that after restricting the size of $\text{supp}(\bw)$, there is a $R>0$ such that if $B(z,R)$ is the open ball of radius $R$ centered at $z$, then
%\beqn\label{cCcR}
%B(z,R)\cap\cC_t\subseteq \cR(z)\quad\text{for any}\ z\in\cC_t\cap \text{supp}(\bw),\ t\in\cI.
%\eeqn
%}
%%%%%%%%%%%

In an effort to apply Theorem~\ref{thmATTmod} to compute the derivative of $\text{Len}_{\sigma,\eta}(\cC_t,\Omega)$ with respect to $t$, let $\cN$ be the $(2n-1)$-dimensional manifold defined as the disjoint union of $\cA_{\partial \cC}$ and $\cA_{\partial}$ (see \eqref{ApC} and \eqref{Ap}), and define the function $\Theta:\cI\times\cN\rightarrow\cD$, by 
\beqn
\Theta(t,m):=
\begin{cases}
\Xi(m) & \text{if}\  m\in \cA_{\partial\cC},\\
\Psi(z+t\bw(z),\ba,\bb,r) & \text{if} \ m=(z,\ba,\bb,r)\in \cA_\partial,
\end{cases}
\eeqn
where $\Xi$ and $\Psi$ are defined in \eqref{XiCOV} and \eqref{PsiCOV}, respectively. We will use the notation $\Theta_t:=\Theta(t,\cdot)$ and a prime will denote a partial derivative with respect to $t$. Since $\Xi$ and $\Psi$ are $C^1$, so is $\Theta$. It follows from Items~\ref{Ipartial2} and \ref{IH2n10} of Lemma~\ref{lemmeasure} and Proposition~\ref{propFP} applied to the curve $\cC_t$ that $\Theta_t(\cN)\cong\partial^*\cD_\Omega(\cC_t)\cong \cD_{\partial\cC1}(t)\cup \cD_{\partial 1}(t)$ (see the notation introduced in \eqref{subsetsim} and \eqref{cong}), where 
\begin{align}
\cD_{\partial\cC1}(t)&:=\{(p,\bu,r)\in\cD\ |\  \cH^0(\bar D(p,\bu,r)\cap \partial\cC_t)=1\},\\
\cD_{\partial 1}(t)&:= \{(p,\bu,r)\in \cD\ |\ \cH^0(\partial D(p,\bu,r)\cap \cC_t)=1\}.
\end{align}
It also follows from Items~\ref{Ipartial2} and \ref{IH2n10} of Lemma~\ref{lemmeasure} that
\beqn
\cH^{2n-1}(\{ (p,\bu,r)\in\partial^*\cD(\cC_t)\ |\ \cH^0(\Theta_t^{-1}(\{(p,\bu,r)\})>1)\})=0.
\eeqn
One can check that the gradient of the functions $\Xi$ and $\Psi$ are injective $\cH^{2n-1}$-a.e. It follows that the gradient of $\Theta_t$ is also injective $\cH^{2n-1}$-a.e.~for sufficiently small $t$. Assume that $\cI$ is chosen small enough so that the gradient of $\Theta_t$ is $\cH^{2n-1}$-a.e.~injective for all $t\in\cI$. Thus, $\Theta$ satisfies the three conditions \ref{D1}--\ref{D3} with $\cO_t$ replaced by $\cD(\cC_t)$. Using the notation in \eqref{defcF} with $\cO_t$ replaced by $\cD(\cC_t)$, the velocity $\bv$ associated with $\Theta$, see \eqref{veldef}, is given by
\beqn\label{varvel}
\bv(t,p,\bu,r)=
\begin{cases}
(\textbf{0},\textbf{0},0) & \text{if}\  (p,\bu,r)\in \cD_{\partial \cC1}(t),\\
(\bw(z),\textbf{0},0) & \text{if} \ (p,\bu,r)\in \cD_{\partial 1}(t),
\end{cases}
\quad (t,p,\bu,r)\in \partial^*\cF,
\eeqn
where $z\in\cC_t$ is the unique point in $\partial D(p,\bu,r)\cap\cC_t$ when $(p,\bu,r)\in\cD_{\partial 1}(t)$. 

As already noted, $f_\eta$ is bounded and $f_\eta\in L^1(\cD(\cC_t))$ since $\text{Len}_{\sigma,\eta}(\cC_t,\Omega)$ is finite for all $t\in\cI$ by the argument at the beginning of Section~\ref{sectFL}. As $f_\eta$ is continuous at $\cH^{2n-1}$-a.e.~$(p,\bu,r)$, it follows that \ref{F2} holds. For any bounded interval $\cG\subseteq\cI$, one can use Proposition~\ref{redbdy} and the change of variables formula \eqref{PsiCOV2} in the Appendix to find that
\begin{align}
\int_\cG\int_{\partial^*\cO_t}|f_\eta\bv\cdot\bnu|\, d\cH^{2n-1}dt&=\int_\cG\int_{\cD_{\partial 1}(t)} f_\eta|\bv\cdot\bnu|\,d\cH^{2n-1}dt\\
&\leq \int_\cG \int_{\cC_t} \int_{\cU^2_\perp\times(\eta,\infty)} \frac{|\bw(z)|}{r^{1+\sigma}}\, d\cH^{2n-2}(\ba,\bb,r)dzdt.
\end{align}
This last integral is finite since $\bw$ is smooth with compact support and $\cG$ is bounded. Finally, using the area formula and the properties of $\Theta_t$, with $J(\Theta_t)$ denoting the Jacobian of $\Theta_t$, notice
\begin{align}
\int_{\partial^*\cD(\cC_t)}f_\eta\bv\cdot\bnu\, d\cH^{2n-1}&=\int_{\Theta_t(\cA_\partial)}f_\eta\bv\cdot\bnu\, d\cH^{2n-1}\\
&=\int_{\cA_\partial} (f_\eta\circ\Theta_t) (\Theta'_t\cdot\bnu\circ\Theta_t)J(\Theta_t)\, d\cH^{2n-1}\\
&=\int_\cC\int_{\cU^2_\perp}\int_\eta^\infty \frac{(\Theta'_t\cdot\bnu\circ\Theta_t)J(\Theta_t)}{r^{1+n+\sigma}}\, drd\cH^{2n-3}dz.
\end{align}
It follows from the regularity of $\Theta$ that the above expression is continuous in $t$.
%\cred{We see from \eqref{varvel} that $\text{supp}(\bv)$ only intersects $\cD_{\partial 1}$. For $(p,\bu,r)\in \cD_{\partial 1}$, we have $\sup_{\ba\in\cU_n\cap\{\bu\}^\perp} \chi_\Omega(p+r\ba)=1$ and, so $f_\delta$ is continuous in a neighborhood of $\text{supp}(\bv)\cap \partial^*\cF$.} 
As clearly $f_\eta'=0$, it has been shown that conditions \ref{F1}--\ref{F5} hold.

We may now apply Theorem~\ref{thmATTmod} to find that
\beqn\label{FVeta}
\big(\text{Len}_{\sigma,\eta}(\cC_t,\Omega)\big)'=\int_{\partial^* \cD(\cC_t)} f_\eta \bv\cdot\bnu d\cH^{2n-1}.
\eeqn
To rewrite the right-hand side of the previous equation, first use Proposition~\ref{redbdy} to obtain an expression for $\bnu$ and then the change of variables formula \eqref{PsiCOV2} in the Appendix. The resulting expression involves an integral over a subset of $\cC_t\times\cU^2_\perp\times (0,\infty)$, which is symmetric under the transformation $\bb\mapsto-\bb$. As the resulting integrand is even under this transformation, only integrating over those $(\ba,\bb)\in\cU^2_\perp$ such that $\bb\cdot\bt(z)>0$ and doubling the result yields
\beqn\label{varsLeneta}
\big(\text{Len}_{\sigma,\eta}(\cC_t,\Omega)\big)'=\int_{\cC_t}\int_{\cU^2_\perp}\int_\eta^\infty \frac{\sqrt{2}\zeta(t,z,\ba,\bb,r) \bw(z)\cdot(\bb\wedge\ba)\bt(z)}{r^{1+\sigma}}dr d\cH^{2n-3}(\ba,\bb)dz,
\eeqn
where
\beqn\label{defchi}
\zeta(t,z,\ba,\bb,r):=\begin{cases}
1 & \text{if}\ \cH^0(D(z+r\ba,\bb,r)\cap \cC_t) \text{ is odd and } \bb\cdot\bt(z)>0,\\
-1 & \text{if}\ \cH^0(D(z+r\ba,\bb,r)\cap \cC_t) \text{ is even and } \bb\cdot\bt(z)>0,\\
0 & \text{otherwise}.
\end{cases}
\eeqn
The goal now is to show that \eqref{varsLeneta}, viewed as a function of $t$, converges uniformly in $t$ as $\eta\rightarrow 0$ to
\beqn\label{limhdef}
%h(t):=\lim_{\ve\rightarrow 0}\int_{\cC_t}\int_{\cD_{\cR(z)}\setminus\cD_{\cR(z)}(\ve)} \sqrt{2}\chi(t,z,\ba,\bb,r)r^{-1-\sigma} \bw(z)\cdot(\ba\wedge\bb)\bt(z)dr d\cH^{2n-3}dz
h(t):=\lim_{\ve\rightarrow 0}\int_{\cC_t}\int_{\cU^2_\perp}\int_\ve^\infty \frac{\sqrt{2}\zeta(t,z,\ba,\bb,r) \bw(z)\cdot(\bb\wedge\ba)\bt(z)}{r^{1+\sigma}}dr d\cH^{2n-3}(\ba,\bb)dz.
\eeqn

First it must be argued that the limit used in defining $h(t)$ exists. Toward that end, let $h_\ve(t)$ be defined as the right-hand side of the previous equation without the limit in $\ve$ being taken. Notice that for $\ve>\ve'>0$,
\beqn\label{hvet}
|h_\ve(t)-h_{\ve'}(t)|=\Big|\int_{\cC_t}\int_{\cU^2_\perp}\int_{\ve'}^\ve \frac{\sqrt{2}\zeta(t,z,\ba,\bb,r) \bw(z)\cdot(\bb\wedge\ba)\bt(z)}{r^{1+\sigma}}dr d\cH^{2n-3}(\ba,\bb)dz\Big|.
\eeqn
For $z\in\cC_t$, consider the transformation 
\beqn
(\ba,\bb)\mapsto (P_{\bt(z)}\ba-P_{\bt(z)}^\perp\ba,P_{\bt(z)}\bb-P_{\bt(z)}^\perp\bb)=:(\tilde\ba,\tilde\bb),
\eeqn
which is a bijection on $\cU^2_\perp$. It follows from \eqref{cCcR} that for sufficiently small $\ve$ it is true that
\beqn
(\ba,\bb,r)\not\in\cD_{\cR(z)}(\ve)\Longleftrightarrow (\tilde\ba,\tilde\bb,r)\not\in\cD_{\cR(z)}(\ve),\qquad z\in \cC_t\cap\text{supp}(\bw).%(P_{\bt(z)}\ba-P_{\bt(z)}^\perp\ba,P_{\bt(z)}\bb-P_{\bt(z)}^\perp\bb)\not\in \cD_{\cR(z)}(\ve).
\eeqn
This means that for $(\ba,\bb,r)\not\in\cD_{\cR(z)}(\ve)$, it is true that
\begin{align}
\zeta(t,z,\ba,\bb,r)&=\zeta(t,z,\tilde\ba,\tilde\bb,r)=-1\\
(\bb\wedge\ba)\bt(z)&=-(\tilde\bb\wedge\tilde\ba)\bt(z),
\end{align}
and, thus, outside of $\cD_{\cR(z)}(\ve)$ the integrand of the integral on the right-hand side of \eqref{hvet} is odd.
% \cred{Notice that the integrand appearing on the right-hand side of \eqref{hvet} is odd with respect to this transformation.} Define the set $\cR(z)$ as in \eqref{defcR} with the dependence on $z$ being made explicit, $\bt$ being replaced by $\bt(z)$, and $K$ being the H\"older constant in \eqref{uniformK}.
%Employing the notation \eqref{cDcReta}, it is true that
%\beqn
%(\ba,\bb,r)\not\in\cD_{\cR(z)}(\ve)\Longleftrightarrow (P_{\bt(z)}\ba-P_{\bt(z)}^\perp\ba,P_{\bt(z)}\bb-P_{\bt(z)}^\perp\bb)\not\in \cD_{\cR(z)}(\ve).
%\eeqn
Thus, we conclude with the aid of Lemma~\ref{lemIB} that
\begin{align}
\nonumber |h_\ve(t)-h_{\ve'}(t)|&=\Big|\int_{\cC_t}\int_{\cD_{\cR(z)}(\ve)\setminus \cD_{\cR(z)}(\ve')} \frac{\sqrt{2}\zeta(t,z,\ba,\bb,r)\bw(z)\cdot(\bb\wedge\ba)\bt(z)}{r^{1+\sigma}} drd\cH^{2n-3}(\ba,\bb)dz\Big|\\
\nonumber &\leq 2\int_{\cC_t}\int_{\cD_{\cR(z)}(\ve)}  \frac{|\bw(z)|}{r^{1+\sigma}}dr d\cH^{2n-3}(\ba,\bb)dz\\
&\leq 2 C \ve^{\alpha-\sigma} \|\bw\|_{L^\infty(\Real^n)} \cH^1(\cC_t),
\end{align}
which shows that the family $(h_\ve(t)\ |\ \ve>0)$ is Cauchy. It then follows that the limit in \eqref{limhdef} defining $h(t)$ exists.

To see that the convergence of \eqref{varsLeneta} to $h(t)$ is uniform in $t$, it suffices to notice that the same calculation as above yields
\begin{align}
|\big(\text{Len}_{\sigma,\eta}(\cC_t,\Omega)\big)'-h(t)|&=|h_\eta(t)-h(t)|\\
&=\lim_{\ve\rightarrow 0} |h_\eta(t)-h_\ve(t)|\\
&\leq 2 C \eta^{\alpha-\sigma} \|\bw\|_{L^\infty(\Real^n)} \cH^1(\cC_t).
\end{align}
As $\bw$ is smooth with compact support and $\cC$ has finite length, $\cH^1(\cC_t)$ is bounded uniformly in $t\in\cI$. Thus, this shows the desired uniform convergence. From this we can conclude that
\beqn
\big(\text{Len}_{\sigma}(\cC_t,\Omega)\big)'=\lim_{\eta\rightarrow 0}\Big(\text{Len}_{\sigma,\eta}(\cC_t,\Omega)\Big)'=h(t).
\eeqn
Hence, for the variation of $\text{Len}_{\sigma}(\cC_t,\Omega)$ in the direction $\bw$ to vanish means that
\beqn
\lim_{\ve\rightarrow 0}\int_{\cC}\int_{\cU^2_\perp}\int_\ve^\infty\frac{\zeta(0,z,\ba,\bb,r) \bw(z)\cdot(\bb\wedge\ba)\bt(z)}{r^{1+\sigma}}dr d\cH^{2n-3}(\ba,\bb)dz=0.
\eeqn
As $\bw$ can be arbitrary in a neighborhood of $z_\circ$ we must have
\beqn
\lim_{\ve\rightarrow 0}\int_{\cU^2_\perp}\int_\ve^\infty\frac{\zeta(0,z_\circ,\ba,\bb,r) (\bb\wedge\ba)\bt(z_\circ)}{r^{1+\sigma}}dr d\cH^{2n-3}(\ba,\bb)=0.
\eeqn
Appealing to the definition of $\zeta$ in \eqref{defchi} and the fact that $z_\circ\in\cC$ was arbitrary, shows that \eqref{LenEL} holds for all $z\in\cC$.
\end{proof}

Since a curve connecting two points of minimal length has zero curvature, the preceding result motivates that we define the nonlocal curvature-vector $\bkappa_\sigma$ at $z\in\cC$ by
\beqn\label{kappas}
\bkappa_\sigma(z):=\lim_{\ve\rightarrow 0}\Big(\int_{\cA_\text{\rm e}^+(z,\ve)}-\int_{\cA_\text{\rm o}^+(z,\ve)}\Big) \frac{(\bb\wedge\ba)\bt(z)}{r^{1+\sigma}}d\cH^{2n-2}(\ba,\bb,r).
\eeqn
Notice that this vector is orthogonal to the curve at $z$, however there is no reason to believe that this vector is parallel to the classical normal to the curve.  The nonlocal scalar-curvature can be defined as the magnitude of this vector: $\kappa_\sigma(z):=|\bkappa_\sigma(z)|$.  

A comparison of \eqref{kappas} with the nonlocal mean-curvature \eqref{HsC} when $n=2$ is in order.  To do so, take the unit tangent of the curve $\bt$ and rotate it $90^\circ$ clockwise to obtain a normal vector $\bn$ to the curve.  The mean curvature of a two-dimensional curve is obtained from the mean-curvature vector by dotting it with $\bn$.  Thus, consider
\beqn\label{sH2d}
\bn(z)\cdot\bkappa_\sigma(z)=\Big(\int_{\cA_\text{\rm e}^+(z)}-\int_{\cA_\text{\rm o}^+(z)}\Big) \frac{(\ba\cdot\bt(z))(\bb\cdot\bn(z))-(\bb\cdot\bt(z))(\ba\cdot\bn(z))}{r^{1+\sigma}}d\cH^{2n-2}(\ba,\bb,r).
\eeqn
Focusing on the numerator of the integrand, notice that $\{\bt(z),\bn(z)\}$ and $\{\ba,\bb\}$ are both orthonormal basis for $\Real^2$ and, hence, there is a orthogonal transformation $\bR$ that takes $\{\bt(z),\bn(z)\}$ to $\{\ba,\bb\}$.  It follows that
\beqn
(\ba\cdot\bt(z))(\bb\cdot\bn(z))-(\bb\cdot\bt(z))(\ba\cdot\bn(z))=\text{det}(\bR).
\eeqn
Thus, this value is either $1$ or $-1$, depending on if these two basis have the same or opposite orientation.  Moreover, from the structure of orthogonal transformations in $\Real^2$, we know that
\beqn
\text{sgn}\det(\bR)=-\text{sgn}((\bb\cdot\bt(z))(\ba\cdot\bn(z)),
\eeqn
where $\text{sgn}\,x=\frac{x}{|x|}$ is the sign function.  Since in $\cA_\text{\rm e}^+(z,\ve)$ and $\cA_\text{\rm o}^+(z,\ve)$ we always have $\bb\cdot\bt(z)>0$, this implies that $\det(\bR)=1$ if and only if $\ba\cdot\bn(z)<0$ and $\det(\bR)=-1$ if and only if $\ba\cdot\bn(z)>0$.  Thus, if we define $\chi_\cC:\cC\times\cU^2\times\Real^+\rightarrow\Real$ by
\beqn
\chi_\cC(z,\ba,\bb,r):=\begin{cases}
1 & ((\ba,\bb,r)\in\cA^+_\text{e}(z,\ve)\ \text{and}\ \ba\cdot\bn(z)<0)\\
 & \qquad\text{or}\ ((\ba,\bb,r)\in\cA^+_\text{o}(z,\ve)\ \text{and}\ \ba\cdot\bn(z)>0)\\
-1 & ((\ba,\bb,r)\in\cA^+_\text{e}(z,\ve)\ \text{and}\ \ba\cdot\bn(z)>0)\\
 & \qquad\text{or}\ ((\ba,\bb,r)\in\cA^+_\text{o}(z,\ve)\ \text{and}\ \ba\cdot\bn(z)<0)\\
 0 & \text{otherwise},
\end{cases}
\eeqn
then \eqref{sH2d} can be written as
\beqn
\bn(z)\cdot\bkappa_\sigma(z)=\lim_{\ve\rightarrow \infty}\int_{(\cU^2_\perp\times (\ve,\infty))^+(z)} \frac{\chi_\cC(z,\ba,\bb,r)}{r^{1+\sigma}}d\cH^{2n-2}(\ba,\bb,r),
\eeqn
where
\beqn
(\cU^2_\perp\times(\ve,\infty))^+(z):=\{(\ba,\bb,r)\in\cU^2_\perp\times (\ve,\infty)\ |\ \bb\cdot\bt(z)>0\}.
\eeqn
Using the change of variables $(\ba,\bb,r)\mapsto z+2r\ba$ shows that $\bn(z)\cdot\bkappa_\sigma(z)$ agrees with $H_\sigma(z)$ in \eqref{HsC} with $n=2$ up to a multiplicative constant.  It is not surprising that they differ by a constant since $H_\sigma(z)$ in \eqref{HsC} is normalized to ensure that it converges in the appropriate limit to the classical mean curvature, but $\bkappa_\sigma(z)$ has not been properly normalized.  The investigation of what this normalization constant should be is left to future work.

\subsection*{Acknowledgments}
\noindent
I would like to thank Cornelia Mihaila for spotting an error in the published version of this manuscript and for comments on this corrected version.

\section{Appendix I: some change of variables}

Here we present several change of variable formulas that follow from the area formula that are needed in this work. Here we will employ the notation $\Real^+_0:=[0,\infty)$.

%Let $\cC$ be a $C^1$ curve in $\Real^n$.  
\begin{lemma}\label{lemCOV}
Consider the function 
$$
\Phi:\overline\cC\times\cU_\perp^2\times\Real^+_0\rightarrow \Real^n\times\Real^n
$$ 
defined by
\beqn\label{PhiCOV}
\Phi(z,\ba,\bb,\xi):=(z+\xi \ba,\bb)\quad \text{for all}\ (z,\ba,\bb,\xi)\in \overline\cC\times\cU_\perp^2\times\Real^+_0.
\eeqn
If $\cA$ is a subset of $\overline\cC\times\cU_\perp^2\times\Real^++0$ and $f:\Phi(\cA)\rightarrow\Real$ is an integrable function, then
\begin{multline}\label{COV}
\int_{\Phi(\cA)}\Big[ \sum_{(z,\ba,\bb,\xi)\in \Phi^{-1}(p,\bu)} f(p,\bu)\Big]d\cH^{2n-1}(p,\bu) \\
= \int_\cA f(z+\xi \ba,\bb)2^{-1/2}\xi^{n-2} |\bb\cdot\bt(z)| d\cH^{2n-1}(z,\ba,\bb,\xi),
\end{multline}
where $\bt(z)$ is a unit-vector tangent to the curve $\overline\cC$ at the point $z$.
\end{lemma}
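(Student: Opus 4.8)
The result is an instance of the area formula for a locally Lipschitz map between two Riemannian manifolds of the same dimension, so the plan is to put $\Phi$ into that framework and then compute its Jacobian; the Jacobian computation is the whole content.

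Since $\cC$ is a compact $C^1$ manifold, the map $\Phi$ of \eqref{PhiCOV} is $C^1$, hence locally Lipschitz; its domain $\cC\times\cU_\perp^2\times\Real^+$ is a manifold of dimension $1+(2n-3)+1=2n-1$, and its image is contained in the $(2n-1)$-dimensional manifold $\Real^n\times\cU_n\subseteq\Real^n\times\Real^n$. Applying the area formula (Federer \cite{Fed}; see also Ambrosio, Fusco, and Pallara \cite{AFP}) to the restriction of $\Phi$ to $\cA$, with integrand $f\circ\Phi$, gives
\beqn
\int_{\cA}(f\circ\Phi)\,J\Phi\,d\cH^{2n-1}=\int_{\Phi(\cA)}\Big[\sum_{(z,\ba,\bb,\xi)\in\Phi^{-1}(p,\bu)}f(p,\bu)\Big]d\cH^{2n-1}(p,\bu),
\eeqn
where $J\Phi$ is the Jacobian of $\Phi$, the pre-images on the right being taken relative to $\cA$ and $f$ being regarded as a function of $(p,\bu)$ alone. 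Thus \eqref{COV} follows once we show that $J\Phi(z,\ba,\bb,\xi)=\xi^{n-2}|\bb\cdot\bt(z)|$.

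To compute $J\Phi$ at a fixed $(z,\ba,\bb,\xi)$, I would choose an orthonormal frame of the tangent space of $\cC\times\cU_\perp^2\times\Real^+$ adapted to the geometry: the unit tangent $\bt(z)$ of $\cC$; an orthonormal basis $e_1,\dots,e_{n-2}$ of the orthogonal complement of $\mathrm{span}\{\ba,\bb\}$ in $\Real^n$, which provides the tangent vectors $(e_i,\bzero)$ and $(\bzero,e_i)$ of $\cU_\perp^2$; the remaining unit tangent vector $\tfrac{1}{\sqrt{2}}(\bb,-\ba)$ of $\cU_\perp^2$; and the coordinate vector $\partial_\xi$ of $\Real^+$. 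One checks directly that these $2n-1$ vectors are orthonormal and span the tangent space. Since $d\Phi(\delta z,\delta\ba,\delta\bb,\delta\xi)=(\delta z+\delta\xi\,\ba+\xi\,\delta\ba,\ \delta\bb)$, their images under $d\Phi$ are $(\bt(z),\bzero)$, $(\xi e_i,\bzero)$, $(\bzero,e_i)$, $\tfrac{1}{\sqrt{2}}(\xi\bb,-\ba)$, and $(\ba,\bzero)$, and $J\Phi$ is the square root of the Gram determinant of this list of $2n-1$ vectors in $\Real^n\times\Real^n$.

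This determinant splits: the vectors $(\bzero,e_i)$ are orthonormal and orthogonal to all the others, contributing a factor $1$; for the remaining $n+1$ vectors, writing $\bt(z)=(\bt(z)\cdot\ba)\ba+(\bt(z)\cdot\bb)\bb+\bt^\perp$ with $\bt^\perp\in\mathrm{span}\{e_1,\dots,e_{n-2}\}$ and using $\ba\cdot\bb=0$ together with $e_i\perp\ba,\bb$, the Gram matrix is bordered-diagonal, with all off-diagonal entries confined to the row and column of $(\bt(z),\bzero)$. Expanding along the sparse column of $(\ba,\bzero)$ and then applying the Schur-complement formula to the arrowhead block, the cross terms cancel and one is left with exactly $\xi^{2(n-2)}|\bb\cdot\bt(z)|^2$, whence $J\Phi=\xi^{n-2}|\bb\cdot\bt(z)|$. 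I expect this determinant evaluation to be the main, though entirely routine, step; the only points requiring care are normalizing the Hausdorff measure on $\cU_\perp^2$ consistently with the convention used elsewhere in the paper, and noting that $J\Phi$ vanishes precisely on the tangency locus $\{\bb\cdot\bt(z)=0\}$, where $d\Phi$ fails to be injective, but this locus is carried to a $\cH^{2n-1}$-null set and so is harmless in the area formula.
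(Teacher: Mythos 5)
Your strategy is the same as the paper's: reduce \eqref{COV} to the area formula and then evaluate the Jacobian of $\Phi$. The paper does the Jacobian computation through local charts $\bchi$ of $\cU_\perp^2$ and a parametrization $\phi$ of $\cC$, followed by block-determinant manipulations; you work directly with a Gram determinant in an adapted orthonormal frame of the domain tangent space. Your frame is a legitimate orthonormal basis, your list of images under $d\Phi$ is correct, and your route is the cleaner of the two.

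However, the determinant does not come out as you assert. After discarding the mutually orthonormal vectors $(\bzero,e_i)$, the remaining Gram matrix is arrowhead with apex $(\bt,\bzero)$ and diagonal block $D=\mathrm{diag}(\xi^2,\dots,\xi^2,\tfrac{1+\xi^2}{2},1)$, the entry $\tfrac{1+\xi^2}{2}$ being the squared norm of $\tfrac{1}{\sqrt2}(\xi\bb,-\ba)$. The Schur-complement formula gives
\beqn
\det D\cdot\Big(1-\sum_i(\bt\cdot e_i)^2-(\bt\cdot\ba)^2-\frac{\xi^2(\bt\cdot\bb)^2}{1+\xi^2}\Big)
=\xi^{2(n-2)}\,\frac{1+\xi^2}{2}\cdot\frac{(\bt\cdot\bb)^2}{1+\xi^2}
=\frac{1}{2}\,\xi^{2(n-2)}(\bb\cdot\bt)^2,
\eeqn
so the cross terms do not cancel exactly: a residual $\tfrac12$ survives, and with the ambient Hausdorff measure on $\cU_\perp^2\subseteq\Real^{2n}$ one gets $J\Phi=\xi^{n-2}|\bb\cdot\bt(z)|/\sqrt2$. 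This is precisely the normalization point you flag at the end and then do not resolve: the projection $(\ba,\bb)\mapsto\ba$ from $\cU_\perp^2$ onto $\cU_n$ has coarea factor $1/\sqrt2$, so the ambient $\cH^{2n-3}$ on $\cU_\perp^2$ is $\sqrt2$ times the iterated spherical measure $\int_{\cU_n}\int_{\cU_n\cap\{\ba\}^\perp}$ used in Lemma~\ref{lemint}. Formula \eqref{COV} with Jacobian $\xi^{n-2}|\bb\cdot\bt(z)|$ is the correct statement for the iterated measure (and the two $\sqrt2$'s cancel against Lemma~\ref{lemint} in Theorem~\ref{Thmlim}); for the ambient Hausdorff measure the extra $1/\sqrt2$ must be kept. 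You need to commit to one convention and carry the constant through, rather than asserting the determinant equals $\xi^{2(n-2)}|\bb\cdot\bt(z)|^2$ on the nose. Apart from this constant, the argument is complete.
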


\begin{proof}
It suffices to prove the result for a set of the form $\cA=\cC_\cA\times \cU_\cA\times\cI_\cA$, where $\cC_\cA\subseteq \overline\cC$, $\cU_\cA\subseteq \cU_\perp^2$, and $\cI_\cA\subseteq \Real^+_0$.  Moreover, by employing a partition of unity, we can reduce the problem to the case where $\cU_\cA$ is covered by one chart.  Thus, there is a set $U_\cA\subseteq \Real^{2n-3}$ and a diffeomorphism $\bchi:U_{\cA}\rightarrow \cU_\cA$.  Since $\cU_\cA\subseteq\Real^n\times\Real^n$, we can view this function as $\bchi=(\bchi_1,\bchi_2)$, where $\bchi_1,\bchi_2:\Real^{2n-3}\rightarrow\Real^n$.  Recall that if $g$ is an integrable function defined on $\cU_\cA$, then
\beqn\label{intdefU}
\int_{\cU_\cA} g(\ba,\bb)\, d\cH^{2n-3}(\ba,\bb)=\int_{U_\cA} g(\bchi_1(w),\bchi_2(w)) J_{\bchi}(w)\, dw,
\eeqn
where $J_{\bchi}=\sqrt{\text{det} (\nabla \bchi ^\top\nabla\bchi)}$ is the Jacobian of $\bchi$.  Also, if $h$ is an integrable function defined on $\cC_\cA$ and $\phi$ is a parameterization of $\bar\cC$, then
\beqn\label{intdefC}
\int_{\cC_\cA} h(z)\, dz=\int_{C_\cA} h(\phi(s)) |\phi'(s)|\, ds,
\eeqn
where $C_\cA$ is the subset of $\Real$ such that $\phi(C_\cA)=\cC_{\cA}$.

Set $A:=C_\cA\times U_\cA\times\cI_\cA$ and define $\tilde \Phi:A\rightarrow \Phi(\cA)$ by
\beqn
\tilde \Phi(s,w,\xi):=\Phi(\phi(s) ,\bchi_1(w),\bchi_2(w),\xi)=(\phi(s)+\xi \bchi_1(w), \bchi_2(w))
\eeqn
and $\tilde f:A\rightarrow \Real$ by
\beqn
\tilde f(s,w,\xi)=f(\phi(s)+\xi\bchi_1(w),\bchi_2(w)).
\eeqn
By the area formula (see Theorem 2.71 of Ambrosio, Fusco, and Pallara \cite{AFP}), it follows that
\beqn
\int_{\tilde\Phi(A)}\Big[ \sum_{(s,w,\xi)\in \Phi^{-1}(p,\bu)} \tilde f(s,w,\xi)\Big] d\cH^{2n-1}(p,\bu)=\int_A \tilde f(s,w,\xi)J_{\tilde \Phi}(s,w,\xi) d\cH^{2n-1}(s,w,\xi).
\eeqn
Thus, by \eqref{intdefU} and \eqref{intdefC}, the result will follow once it is shown that
\beqn\label{detneeded}
J_{\tilde \Phi}=2^{-1/2}\xi^{n-2}|\bchi_2\cdot \bt | |\phi'| J_{\bchi}.
\eeqn
To establish this, first notice that 
\beqn
\nabla\tilde\Phi=\begin{blockarray}{cccc}
1 & 2n-3 & 1  & \\
    \begin{block}{(c|c|c)c}
      \phi' & \xi \nabla \bchi_1  & \bchi_1   & n  \\ 
      \cline{1-3}
      \textbf{0}  & \nabla \bchi_2 & \textbf{0} & n  \\ 
    \end{block}
\end{blockarray}.
\eeqn
and hence, noting that $|\bchi_1|^2=1$ and $\nabla\bchi_1^\top\bchi_1=\bzero$, we have
\beqn
\nabla\tilde\Phi^\top\nabla\tilde\Phi=\begin{blockarray}{ccc}
%1 & n-1 & 1 & 1 & \\
    \begin{block}{(c|c|c)}
      |\phi'|^2  & \xi \phi'^\top\nabla\bchi_1& \bchi_1\cdot \phi' \Tstrut  \\ 
       \cline{1-3}
      \xi\nabla\bchi_1^\top \phi'  & \xi^2\nabla\bchi_1^\top\nabla\bchi_1+\nabla\bchi_2^\top\nabla\bchi_2 & 0  \Tstrut  \\
      \cline{1-3}
      \bchi_1\cdot \phi'  & 0 & 1  \Tstrut  \\
    \end{block}
\end{blockarray}\, .
\eeqn
Since switching rows or columns of a matrix does not change the absolute value of its determinant, we have 
\beqn
|\text{det}(\nabla\tilde\Phi^\top\nabla\tilde\Phi)| = \left |\text{det}\,
\begin{blockarray}{ccc}
    \begin{block}{(c|c|c)}
      |\phi'|^2 & \bchi_1\cdot \phi' & \xi \phi'^\top\nabla\bchi_1 \Tstrut  \\ 
      \cline{1-3}
      \bchi_1\cdot \phi'  & 1 & 0  \Tstrut  \\
      \cline{1-3}
      \xi\nabla\bchi_1^\top \phi'  & 0 & \xi^2\nabla\bchi_1^\top\nabla\bchi_1+\nabla\bchi_2^\top\nabla\bchi_2  \Tstrut  \\
    \end{block}
\end{blockarray}\, \right| .
\eeqn

Recall that the determinant of a block matrix can be computed using
\begin{align*}
\det\begin{blockarray}{cc}
  &  \\
    \begin{block}{(c|c)}
      \bA & \bB   \\ 
      \cline{1-2}
      \bC & \bD \Tstrut \\ 
       \end{block}
\end{blockarray}&=\det(\bA)\det(\bD-\bC\bA^{-1}\bB).
\end{align*}
This identity will be used with $\bD=\xi^2\nabla\bchi_1^\top\nabla\bchi_1+\nabla\bchi_2^\top\nabla\bchi_2$.  Let $P_{\bchi_1}$ denote the projection onto the plane orthogonal to the vector $\bchi_1$.  After some computation, using the identity $|P_{\bchi_1} \phi'|^2=|\phi'|^2-(\bchi_1\cdot\phi')^2$, one finds that 
\beqn
|\text{det}(\nabla\tilde\Phi^\top\nabla\tilde\Phi)|=|P_{\bchi_1}\phi'|^2 \left |\text{det}\Big[ \nabla\bchi_1^\top(\xi^2\textbf{1}_n-\frac{\xi^2}{|P_{\bchi_1}\phi'|^2} P_{\bchi_1}\phi'\otimes P_{\bchi_1}\phi')\nabla\bchi_1+\nabla\bchi_2^\top\nabla\bchi_2\Big]\right |.
\eeqn
where $\textbf{1}_n$ is the identity function on $\Real^n$.  It follows that 
\beqn\label{det2}
|\text{det}(\nabla\tilde\Phi^\top\nabla\tilde\Phi)|=|P_{\bchi_1}\phi'|^2 \Big | \text{det}\Big [ \nabla\bchi^\top
\begin{blockarray}{cc}
  &  \\
    \begin{block}{(c|c)}
      \xi^2\textbf{1}_n-\xi^2 \tilde\bt\otimes \tilde\bt & \textbf{0}   \\ 
      \cline{1-2}
      \textbf{0} & \textbf{1}_n \Tstrut \\ 
       \end{block}
\end{blockarray}\nabla\bchi \Big]\Big |,
\eeqn
where $\tilde\bt=P_{\bchi_1}\phi'/|P_{\bchi_1}\phi'|$.  To simplify the right-hand side of the previous equation, set $\bL$ equal to the square $2n\times 2n$ matrix in the previous equation between $\nabla \bchi^\top$ and $\nabla\bchi$ and recall the fact that
\beqn\label{detfact}
\text{det}(\nabla\bchi^\top\bL\nabla\bchi)=J_{\bchi}^2\text{det}(\bI^\top\bL\bI),
\eeqn
where $\bI$ is the natural injection of the range of $\nabla\bchi$ into $\Real^{2n}$.  One can find an orthonormal basis for $\Real^n$ of the form
$$(\be_1,\be_2,\dots,\be_{n-2},\bchi_1,\bchi_2)$$
such that $\tilde\bt$ can be written as a linear combination of $\be_{n-2}$ and $\bchi_2$.  Notice that
\beqn\label{U2basis}
\{(\be_1,\bzero),\dots, (\be_{n-2},\bzero),(\bzero,\be_1),\dots,(\bzero,\be_{n-2}),\frac{1}{\sqrt{2}}(\bchi_2,-\bchi_1)\}
\eeqn
is an orthonormal basis for the tangent space of $\cU_\perp^2$ at $(\bchi_1,\bchi_2)$.  By writing the matrix
$$\bI^\top\begin{blockarray}{cc}
  &  \\
    \begin{block}{(c|c)}
      \xi^2\textbf{1}_n-\xi^2 \tilde\bt\otimes \tilde\bt & \textbf{0}   \\ 
      \cline{1-2}
      \textbf{0} & \textbf{1}_n \Tstrut \\ 
       \end{block}
\end{blockarray}\,\bI$$
relative to the basis \eqref{U2basis}, one can compute its determinant and, hence, putting together \eqref{det2} and \eqref{detfact}, we have that
\beqn
|\text{det}(\nabla\bchi^\top\bL\nabla\bchi)|=2^{-1/2}\xi^{2n-4}|P_{\bchi_1}\phi'|^2 |\bchi_2\cdot \tilde\bt|^2J^2_{\bchi}.
\eeqn
Since $|P_{\bchi_1}\phi'|^2 |\bchi_2\cdot \tilde\bt|^2=|\bchi_2 \cdot  P_{\bchi_1}\bt|^2|\phi'|^2=|\bchi_2\cdot\bt|^2|\phi'|^2$, this proves \eqref{detneeded}.
\end{proof}

\begin{figure}[h]
\centering
\includegraphics[width=4in]{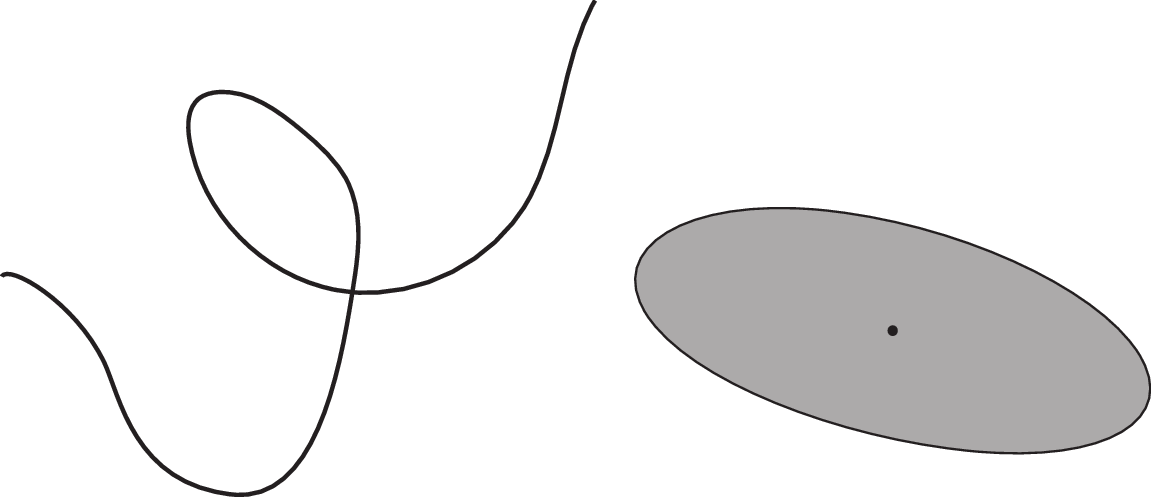}
\thicklines
\put(-177,68){$z$}
\put(-179,54){$\bullet$}
\put(-178,55){\rotatebox[origin=c]{-8}{$\vector(1,0){25}$}}
\put(-165,45){$\ba$}
\put(-90,33){$z+\xi\ba$}
\put(-74.5,50){\rotatebox[origin=c]{70}{$\vector(1,0){25}$}}
\put(-73,55){$\bb$}
\put(-250,50){$\cC$}
\caption{A depiction of the disc associated with $\Xi(z,\ba,\bb,\xi,r)$.}
\label{figcovXi}
\end{figure}

A slight variation on the change of variables formula \eqref{COV} will be needed.  Namely, we will require a change of variables associated with the function
$$
\Xi:\overline\cC\times\cU_\perp^2\times\Real^+_0\times\Real^+\rightarrow \Real^n\times\Real^n\times\Real
$$ 
defined by
\beqn\label{XiCOV}
\Xi(z,\ba,\bb,\xi,r):=(z+\xi \ba,\bb,r)\quad \text{for all}\ (z,\ba,\bb,\xi,r)\in \overline\cC\times\cU_\perp^2\times\Real^+_0\times\Real^+.
\eeqn
The function $\Xi$ allows us to describe discs using points on $\bar\cC$; see Figure~\ref{figcovXi}.
Since $\Xi$ acts like the identity on the last variable $r$, the previous lemma immediately implies that if $\cA$ is a subset of $\overline\cC\times\cU_\perp^2\times\Real^+_0\times\Real^+$ and $f:\Xi(\cA)\rightarrow\Real$ is an integrable function, then
\begin{multline}\label{XiCOV2}
\int_{\Xi(\cA)}\Big[ \sum_{(z,\ba,\bb,\xi,r)\in \Xi^{-1}(p,\bu,r)} f(p,\bu,r)\Big]d\cH^{2n}(p,\bu, r) \\
= \int_\cA f(z+\xi \ba,\bb,r)2^{-1/2}\xi^{n-2} |\bb\cdot \bt(z)|  d\cH^{2n}(z, \ba,\bb,\xi,r).
\end{multline}

\begin{figure}[h]
\centering
\includegraphics[width=4in]{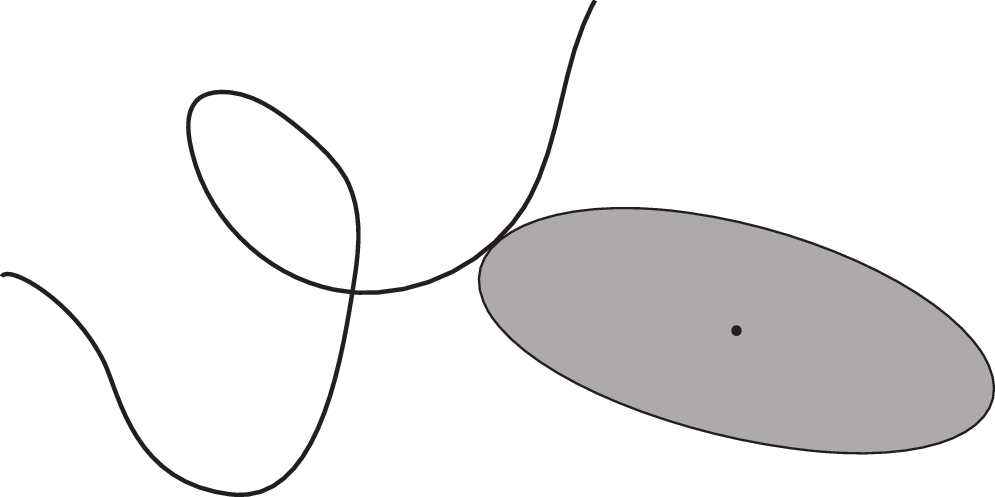}
\thicklines
\put(-149,82){$z$}
\put(-149,71){$\bullet$}
\put(-147,69){\rotatebox[origin=c]{-20}{$\vector(1,0){25}$}}
\put(-144,63){$\ba$}
\put(-90,40){$z+r\ba$}
\put(-85,56){\rotatebox[origin=c]{70}{$\vector(1,0){25}$}}
\put(-83,65){$\bb$}
\put(-250,50){$\cC$}
\caption{A depiction of the disc associated with $\Psi(z,\ba,\bb,r)$.}
\label{figcovPhi}
\end{figure}

Another useful change of variables will be needed.  This time the function, which is closely related to $\Xi$, will allow us to describe all discs whose boundary touches the curve $\overline\cC$; see Figure \ref{figcovPhi}.  Define the function
$$
\Psi:\overline\cC\times\cU_\perp^2\times\Real^+\rightarrow \Real^n\times\Real^n\times\Real
$$ 
by
\beqn\label{PsiCOV}
\Psi(z,\ba,\bb,r):=(z+r\ba,\bb,r)\quad \text{for all}\ (z,\ba,\bb,r)\in \overline\cC\times\cU_\perp^2\times\Real^+.
\eeqn
It is possible to show that if $\cA$ is a subset of $\overline\cC\times\cU_\perp^2\times\Real^+$ and $f:\Psi(\cA)\rightarrow\Real$ is an integrable function, then
\begin{multline}\label{PsiCOV2}
\int_{\Psi(\cA)}\Big[ \sum_{(z,\ba,\bb,r)\in \Psi^{-1}(p,\bu,r)} f(p,\bu,r)\Big]d\cH^{2n-1}(p,\bu, r) \\
= \int_\cA f(z+r\ba,\bb,r)2^{-1/2}r^{n-2} \sqrt{(1+r^2)(\ba\cdot\bt(z))^2+2(\bb\cdot\bt(z))^2}  d\cH^{2n-1}(z,\ba,\bb,r).
\end{multline}
The proof of this result is similar to that of Lemma~\ref{lemCOV} and, thus, will not be presented.  

\section{Appendix II: a transport theorem}

%Seguin \cite{SeguinATT} established a formula for the rate of change of an integral in which both the domain of integration and the integrand depended on time under certain conditions on the domain and integrand. More precisely, the following result was established.

%\begin{theorem}\label{thmATT}
%Let $\cM$ be a $d$-dimensional submanifold of $\Real^N$. For each $t\in\Real$, let $\cO_t$ be an open subset of $\cM$ that is of finite perimeter such that there exists a $(d-1)$-dimensional Riemannian manifold $\cN$ and a function $\Theta\in C^1(\Real,W^{1,\infty}(\cN,\Real^d))$ such that for all $t\in\Real$ the following conditions hold:
%\begin{enumerate}[label=(\textit{D\arabic*})]
%\item \label{D1} the differential of $\Theta_t:=\Theta(t,\cdot):\cN\rightarrow\Real^n$ is injective, where it exists,
%\item \label{D2} $\partial^*\cO_t$ and $\Theta_t(\cN)$ differ by a set of $\cH^{d-1}$-measure zero, and
%\item \label{D3} $\cH^{d-1}(\{ x\in\partial^*\cO_t\ |\ \cH^0(\Theta_t^{-1}(\{x\}))>1\})=0$.
%\end{enumerate}
%It follows that for any $f\in C^1(\Real,W^{1,1}(\cM,\Real))$, we have
%\beqn\label{eqATT}
%\Big(\int_{\cO_t} f\, d\cH^{d} \Big)'= \int_{\cO_t} f' d\cH^d+ \int_{\partial^*\cO_t} f \bv\cdot\bn\, d\cH^{d-1},
%\eeqn
%where a prime denotes partial differentiation with respect to $t$, $\bn(t,\cdot)$ is the exterior unit-normal to $\cO_t$, and $\bv$ is the ``velocity'' associated with $\Theta$, which is defined by
%\beqn\label{veldef}
%\bv(t,x):=\Theta'(t,\Theta_t^{-1}(x))\quad \text{for $\cH^{d-1}$-a.e.}\ x\in \partial^*\cO_t.
%\eeqn
%\end{theorem}

Seguin \cite{SeguinATT} established a formula for the rate of change of an integral in which both the domain of integration and the integrand depended on time under certain conditions on the domain and integrand. The goal of this appendix is to establish a version of this result, using some of the results in Seguin \cite{SeguinATT}, that can be applied to compute the first variation of the $s$-length.

\begin{theorem}\label{thmATTmod}
Let $\cM$ be a $d$-dimensional submanifold of $\Real^N$. For each $t\in\cI$, $\cI$ being an open interval of $\Real$, let $\cO_t$ be an open subset of $\cM$ that is locally of finite perimeter with exterior unit normal $\bnu(t,\cdot)$ such that there exists a $(d-1)$-dimensional Riemannian manifold $\cN$ and a function $\Theta\in C^1(\cI\times\cN,\Real^d)$ such that for all $t\in\Real$ the following conditions hold:
\begin{enumerate}[label=(\textit{D\arabic*})]
\item \label{D1} the gradient of $\Theta_t:=\Theta(t,\cdot):\cN\rightarrow\Real^n$ is injective $\cH^{d-1}$-a.e.,
\item \label{D2} $\partial^*\cO_t$ and $\Theta_t(\cN)$ differ by a set of $\cH^{d-1}$-measure zero, and
\item \label{D3} $\cH^{d-1}(\{ x\in\partial^*\cO_t\ |\ \cH^0(\Theta_t^{-1}(\{x\}))>1\})=0$.
\end{enumerate}
The `velocity' $\bv$ associated with $\Theta$ is defined by
\beqn\label{veldef}
\bv(t,x):=\Theta'(t,\Theta_t^{-1}(x))\quad \text{for $\cH^{d-1}$-a.e.}\ x\in \partial^*\cO_t,
\eeqn
where the prime denotes the partial derivative with respect to $t$. Set
\beqn\label{defcF}
\cF:=\{(t,x)\in\cI\times\cM\ |\ x\in \cO_t\}\subseteq \cI\times\cM=:\cE,
\eeqn
and consider $f:\cE\rightarrow\Real$ such that
\begin{enumerate}[label=(\textit{F\arabic*})]
\item \label{F1} $f\in L^\infty(\cF)$ and $f(t,\cdot)\in L^1(\cO_t)$ for all $t\in\cI$,
\item \label{F2} $\cH^d$-a.e.~$(x,t)\in\cE$ is a Lebesgue point of $f$,
\item \label{F3} for any bounded interval $\cJ\subseteq\cI$, $\int_\cJ\int_{\partial^*\cO_s}|f\bv\cdot\bnu|\, d\cH^{d-1}<\infty$,
\item \label{F4} the function $t\mapsto \int_{\partial^*\cO_t}f\bv\cdot\bnu\, d\cH^{d-1}$ is continuous on $\cI$, and
%\item \label{F3} $f$ is continuous in a neighborhood of $\text{\rm supp}(\bv)\cap \partial^*\cF$, and
\item \label{F5} $f'=0$.
\end{enumerate}
It follows that
\beqn\label{eqATT}
\Big(\int_{\cO_t} f\, d\cH^d \Big)'= \int_{\partial^*\cO_t} f \bv\cdot\bn\, d\cH^{d-1}.
\eeqn
\end{theorem}

\begin{proof}
Fix $t_\circ,t\in\Real$, and consider the set
\beqn
\cW:=\{(s,x)\in\Real\times\cM\ |\ t_\circ<s<t,\ x\in \cO_s\}\subseteq \Real\times\cM.
\eeqn
The arguments in Seguin \cite{SeguinATT} show that $\cW$ is locally a set of finite perimeter with reduced boundary given by $\partial^*\cW=\cB\cup\cS\cup\cT$, where
\beqn\label{rbdyWparts}
\cB:=\{t_\circ\}\times \cO_{t_\circ},\quad \cS:=\bigcup_{s\in (t_\circ,t)}\{s\}\times\partial^*\cO_s,\quad \text{and}\quad \cT:=\{t\}\times\cO_t.
\eeqn
Using the purely time-like vector $\btau:=(1,\textbf{0})$, the exterior unit-normal $\bn$ to $\cW$ is given by
\beqn\label{bnuformula}
\bn=\begin{cases}
-\btau & \text{on}\ \cB,\\
\frac{\bnu-(\bv\cdot \bnu)\btau}{\sqrt{1+(\bv\cdot\bnu)^2}} & \text{on}\ \cS,\\
\btau & \text{on}\ \cT,
\end{cases}
\eeqn
where $\bv$ is defined in \eqref{veldef}. It was also established in Seguin~\cite{SeguinATT} that if $\phi$ is an integrable function on $\cS$, then
\beqn\label{cScoAF}
\int_\cS\phi\, d\cH^d=\int_{t_\circ}^t\int_{\partial^*\cO_s}\phi\sqrt{1+(\bv\cdot\bnu)^2}\, d\cH^{d-1}ds.
\eeqn

Let $\nabla_\cE$ and $\text{div}_\cE$ denote the (covariant) gradient and divergence of a function defined on a subset of $\cE$, and, as before, a prime will denote a partial derivative with respect to $t$. Notice that for any $\psi\in C^\infty_c(\cE)$ we have
\beqn
\int_\cE f\btau\cdot\nabla_\cE\psi\, d\cH^{d+1}=\int_\cE f \psi' \,d\cH^{d+1}=-\int_\cE f' \psi \,d\cH^{d+1}=0.
\eeqn
It follows that $\text{div}_\cE(f\btau)=0$ in the weak sense. 

For each $n\in\Nat$, let $\phi_n:\cE\rightarrow\Real$ be a Lipschitz function with compact support such that $\phi_n'=0$ on $\cW$, $|\phi_n|\leq 1$, and $\phi_n=1$ on a set $\cA_n$ with $\cup_{n\in\Nat}\cA_n=\cE$. As $f\btau$ is bounded and has zero divergence, the divergence theorem\footnote{For the divergence theorem in the case of bounded divergence measure fields involving sets of finite perimeter see \u{S}ilhav\'y \cite{S05} or Chen and Torres \cite{CT05}.}  implies that
\beqn
\int_{\partial^*\cW} \phi_n q\, d\cH^d = \int_\cW \nabla_\cE\phi_n \cdot (f\btau)\, d\cH^{d+1}=\int_\cW \phi_n' f \, d\cH^{d+1}=0,
\eeqn
where $q$ is the normal trace of $f\btau$\footnote{The concept of normal trace in this context was first introduced by Anzellotti \cite{A83}}. Recall that $\partial^*\cW$ is the union of the sets defined in \eqref{rbdyWparts}. Since $\cH^d$-a.e.~point of $\cE$ is a Lebesgue point of $f$, the normal trace $q=f\btau\cdot\bn$ $\cH^d$-a.e.~on $\partial^*\cW$ (see \u{S}ilhav\'y \cite[Remark 4.3]{S05}.) Using this formula for the normal trace, \eqref{bnuformula}, and \eqref{cScoAF}, the previous equation can be written as
\beqn\label{NDTT}
-\int_{\cO_{t_\circ}}\phi_n f\, d\cH^d+\int_{\cO_t}\phi_n f\, d\cH^d-\int_{t_\circ}^t\int_{\partial^*\cO_s}\phi_n f\bv\cdot\bnu\, d\cH^{d-1}ds=0.
\eeqn
Taking the limit as $n$ goes to infinity, the assumed integrability conditions on $f$ imply that the limit of each of the integrals in the previous equation exist and
\beqn\label{NDTT2}
-\int_{\cO_{t_\circ}}f\, d\cH^d+\int_{\cO_t}f\, d\cH^d-\int_{t_\circ}^t\int_{\partial^*\cO_s}f\bv\cdot\bnu\, d\cH^{d-1}ds=0.
\eeqn
As $\partial^*\cO_s=\Theta_s(\cN)$ up to a set of $\cH^{d-1}$-measure zero, it follows that
\beqn
\int_{\partial^*\cO_s}f\bv\cdot\bnu\, d\cH^{d-1}=\int_{\Theta_s(\cN)}f\bv\cdot\bnu\, d\cH^{d-1},\quad s\in\cI.
\eeqn
From \ref{F4} the function $s\mapsto \int_{\Theta_s(\cN)}f\bv\cdot\bnu\, d\cH^{d-1}$ is continuous. Thus, the third term on the left-hand side of \eqref{NDTT2} is differentiable with respect to $t$ and differentiating \eqref{NDTT2} yields the desired result.
\end{proof}

\bibliographystyle{acm}
\bibliography{nonlength} 

\end{document}